\newtheorem{theo}{Theorem}
\newtheorem{lemm}[theo]{Lemma}
\newtheorem{coro}[theo]{Corollary}
\newtheorem{prop}[theo]{Proposition}
\numberwithin{theo}{section} 
\numberwithin{equation}{section}
\newcommand{\C}{\mathbb{C}}
\newcommand{\Z}{\mathbb{Z}}
\newcommand{\N}{\mathbb{N}}
\newcommand{\Span}{\operatorname{span}}
\newcommand{\Char}{\operatorname{char}}
\begin{document}

\begin{center}
{\Large \bf
A Lie algebra of Grassmannian Dirac operators\\[1mm] 
and vector variables
}
\vskip 1cm
{Asmus K. BISBO~$^\dag$, Hendrik DE BIE~$^\ddag$ and Joris VAN DER JEUGT~$^\dag$}
\end{center}
\vskip 1cm
{$^\dag$~Department of Applied Mathematics, Computer Science and Statistics, Ghent University, Krijgs\-laan 281-S9, B-9000 Gent, Belgium}\\
{$^\ddag$~Department of Electronics and Information Systems, Faculty of Engineering and Architecture,
Ghent University, Krijgslaan 281-S8, B-9000 Gent, Belgium} \\
{\href{mailto:Asmus.Bisbo@UGent.be}{Asmus.Bisbo@UGent.be}, \href{mailto:Joris.VanderJeugt@UGent.be}{Joris.VanderJeugt@UGent.be}, \href{mailto:Hendrik.DeBie@Ugent.be}{Hendrik.DeBie@Ugent.be}} 

\begin{abstract}
The Lie algebra generated by $m$ $p$-dimensional Grassmannian Dirac operators and $m$ $p$-dimensional vector variables is identified as the orthogonal Lie algebra $\mathfrak{so}(2m+1)$.
In this paper, we study the space $\mathcal{P}$ of polynomials in these vector variables, corresponding to an irreducible $\mathfrak{so}(2m+1)$ representation.
In particular, a basis of $\mathcal{P}$ is constructed, using various Young tableaux techniques.
Throughout the paper, we also indicate the relation to the theory of parafermions.
\end{abstract}

\section{Introduction}
\label{sec1}
The theory of Dirac operators and vector variables is robust, in a wide range of fields in mathematics, physics and computer science. Systems with an arbitrary number of such operators are less studied and the complexities quickly mount as the number of operators rise, showing structure not present in the simpler case \cite{Colombo-Sommen-Sabadini-Struppa-2004}.

Traditionally the $p$-dimensional Dirac operator, $D$ and vector variable $X$ are defined as $\displaystyle D=\sum_{i=1}^p \frac{\partial}{\partial x_i} e_i$ and $\displaystyle X=\sum_{i=1}^p x_i e_i$ respectively, where $x_1,\dots, x_p$ are ordinary variables and $e_1,\dots, e_p$, satisfying $e_ie_j+e_je_i=2\delta_{ij}$, are the generators of the complex Clifford algebra. 

The usefulness of Dirac operators and vector variables has led people to search for interesting analogs and modifications. By introducing a reflection group and exchanging the derivatives for Dunkl derivatives, one obtains the so called Dunkl-Dirac operator \cite{DeBie-Oste-VanderJeugt-2018,Orsted-Somberg-Soucek-2009}.

In this paper we consider the $m$ modified Dirac operators $D_i$ and vector variables $\Theta_i$ ($i=1,\ldots,m$) obtained by replacing the ordinary variables with Grassmannian (anticommuting) variables. We shall refer to them as Grassmannian Dirac operators and vector variables.
Systems with one Grassmannian Dirac operator and one Grassmannian vector variable were studied in \cite{Slupinski-1996}.

Dirac operators and vector variables on superspace have also been considered \cite{Coulembier-DeBie-2015}. Here a combination of ordinary and Grassmannian variables are used.

Whereas ordinary Dirac operators and vector variables can be thought of as acting on Clifford algebra valued polynomials, Grassmannian Dirac operators and vector variables act on the space of Clifford algebra valued exterior forms, or equivalently the space of Clifford algebra valued Grassmann polynomials. Such have previously been considered in \cite{Monakhov-2016-1,Monakhov-2016-2,Slupinski-1996}.

Although the setting of $m$ $p$-dimensional Grasmannian Dirac operators and vector variables is quite simple, their study leads to interesting Lie algebraic properties.
First of all, we observe that the set of $2m$ operators $D_i$ and $\Theta_i$ generate the orthogonal Lie algebra $\mathfrak{so}(2m+1)$.
The space $\mathcal{P}$ of polynomials in the (non-commuting) vector variables $\Theta_i$ is also worth studying.
It coincides with the irreducible $\mathfrak{so}(2m+1)$ representation with Dynkin labels $[0,\ldots,0,p]$.
Our main effort in this paper is the construction of an appropriate basis of $\mathcal{P}$, in terms of the vector variables $\Theta_i$.
This construction involves combinatorial techniques, for which several types of Young tableaux are needed.
As a byproduct, we also obtain elegant expressions of the basis elements of $\mathcal{P}$ in terms of the ordinary Grassmann variables $\theta_{i\alpha}$ and the Clifford algebra generators.

The present study can also be framed in the theory of so-called parafermions, introduced in mathematical physics a long time ago~\cite{Green-1953}.
In fact, the Grassmannian vector variables and Dirac operators $\Theta_i$ and $D_i$ are a realization of parafermion creation and annihilation operators $a_i^+$ and $a_i^-$, as they satisfy the same triple relations~\cite{Stoilova-VanderJeugt-2008}.
The basis of $\mathcal{P}$ constructed in this paper, is then a basis of the parafermionic Fock space of order~$p$~\cite{Ohnuki-Kamefuchi-1982,Stoilova-VanderJeugt-2008} purely in terms of parafermionic creation operators acting on a vacuum state.
Although the contents of this paper is mathematical, we shall point out the link to parafermion theory whenever relevant.

\section{Preliminaries}
\label{sec2}

In this section we introduce the fundamental concepts which will be used throughout the paper. 
We begin by introducing Clifford algebras, exterior forms, Grassmann variables, and Grassmannian Dirac operators and vector variables. 
More elaborate expositions on Clifford algebras and exterior forms can be found in \cite{Berezin-1987,Porteous-1982}.
Following this we introduce the combinatorial objects needed: partitions, Young diagrams, Young tableaux and semistandard Young tableaux. Where possible we follow \cite{Macdonald-1995} with respect to the notation and conventions. Finally we introduce the notion of subtableaux of semistandard Young tableaux and use it to define a total ordering of the set of semistandard Young tableaux.

\subsection{Exterior forms and Clifford algebras}
\label{sec2.1}

Throughout this paper $m$ and $p$ will be positive integers. We let $\Lambda[\C^{mp}]$ refer to the space of exterior forms on the vector space $\C^{mp}$, that is $\Lambda[\C^{mp}]$ is the exterior algebra on $\C^{mp}$. The vector space $\Lambda[\C^{mp}]$ has a basis consisting of the $2^{mp}$ elements 
\begin{equation}
\theta^\gamma:=\theta_{11}^{\gamma_{11}}\wedge\cdots \wedge\theta_{m1}^{\gamma_{m1}}\wedge
\theta_{12}^{\gamma_{12}}\wedge\cdots \wedge\theta_{m2}^{\gamma_{m2}}\wedge
\cdots\wedge
\theta_{1p}^{\gamma_{1p}}\wedge\cdots \wedge\theta_{mp}^{\gamma_{mp}},
	\end{equation} 
for $\gamma\in M_{mp}(\Z_2)$. Here $M_{mp}(\Z_2)$ denotes the space of $m$ by $p$ matrices with values in $\Z_2$.
For ease of notation we will suppress the wedge products opting instead to represent such elements as monomials in the Grassmann variables $\theta_{i\alpha}$. In this notation
\begin{equation}
\theta^\gamma:=\theta_{11}^{\gamma_{11}}\cdots \theta_{m1}^{\gamma_{m1}}
\theta_{12}^{\gamma_{12}}\cdots \theta_{m2}^{\gamma_{m2}}\cdots
\theta_{1p}^{\gamma_{1p}}\cdots \theta_{mp}^{\gamma_{mp}},
	\end{equation}
for $\gamma\in M_{mp}(\Z_2)$. Together with the Grassmann variables $\theta_{ij}$ we will consider the corresponding Grassmann derivatives $\partial_{i\alpha}:=\frac{\partial}{\partial \theta_{i\alpha}}$. Grassmann variables and derivatives satisfy the following algebraic relations,
\begin{equation}
\{ \partial_{i\alpha}, \theta_{j\beta}\}=\delta_{ij}\delta_{\alpha\beta},
\quad
\{ \partial_{i\alpha}, \partial_{j\beta}\}=0
\quad \text{ and }\quad
\{ \theta_{i\alpha}, \theta_{j\beta}\}=0,
	\end{equation}
for all $i,j\in\{1,\dots,m\}$ and $\alpha,\beta\in \{1,\dots,p\}$, where $\{a,b\}=ab+ba$ is the anti-commutator bracket. Grassmannian variables and derivatives have a natural action on the space $\Lambda[\C^{mp}]$ given by
\begin{equation}
\theta_{i\alpha}(q):= \theta_{i\alpha}q,\quad 
\partial_{i\alpha}(q'):=\partial_{i\alpha}q' \quad\text{ and }\quad
\partial_{i\alpha}(1):=0,
	\end{equation}
for all $i\in\{1,\dots,m\}$ and $\alpha\in \{1,\dots,p\}$, $q,q'\in\Lambda[\C^{mp}]$ with $q'\neq 0$.

We let $\mathcal{C}\ell_p$ denote the complex Clifford algebra with $p$ generators $e_1,\dots,e_p$ satisfying 
\begin{equation}
\label{sec2_eq_clifford-relation}
\{e_\alpha,e_\beta\}=2\delta_{\alpha\beta},
	\end{equation}
for all $\alpha,\beta\in\{1,\dots,p\}$.
As a vector space $\mathcal{C}\ell_p$ has a basis consisting of the $2^p$ elements 
\begin{equation}
e^\eta:= e_1^{\eta_1}\cdots e_p^{\eta_p},
	\end{equation}
for $\eta\in \Z_2^p$.
The $m$ Grassmannian Dirac operators and $m$ Grassmannian vector variables can now be defined as
\begin{equation}
D_i:=\sum_{\alpha=1}^p \frac{\partial}{\partial\theta_{i\alpha}} e_\alpha
\quad \text{ and } \quad
\Theta_i:=\sum_{\alpha=1}^p \theta_{i\alpha} e_\alpha,
	\end{equation}
for all $i\in\{1,\dots,m\}$. 
These operators act on the space $\mathcal{B}:=\Lambda[\C^{mp}]\otimes \mathcal{C}\ell_p$. Depending on our perspective $\mathcal{B}$ can be called the space of Clifford algebra valued Grassmann polynomials or the space of Clifford algebra valued exterior forms. 
The action of $D_i$ and $\Theta_i$ is defined as follows.
\begin{equation}
D_i(q \otimes f):=\sum_{\alpha=1}^p \partial_{i\alpha}q \otimes e_\alpha f
\quad\text{and}\quad
\Theta_i(q\otimes f):=\sum_{\alpha=1}^p \theta_{i\alpha}q  \otimes e_\alpha f,
	\end{equation}
for all $i\in\{1,\dots,m\}$, $q\in \Lambda[\C^{mp}]$ and $f\in \mathcal{C}\ell_p$. In the future we will be suppressing the tensor product when discussing elements of $\mathcal{B}$.
We make $\mathcal{B}$ into a Hilbert space by endowing it with the Hermitian inner product $\langle\cdot,\cdot\rangle$ defined by 
\begin{equation}
\langle \theta^\gamma e^\eta, \theta^{\gamma'}e^{\eta'}\rangle:= \delta_{\gamma\gamma'}\delta_{\eta\eta'},
	\end{equation}
for all $\gamma,\gamma'\in M_{mp}(\Z_2)$ and $\eta,\eta'\in \Z_2^p$, where the first component is chosen to be antilinear. A short calculation shows that 
\begin{equation}
\langle \Theta_i\theta^\gamma e^\eta, \theta^{\gamma'}e^{\eta'}\rangle
	= \langle \theta^\gamma e^\eta, D_i\theta^{\gamma'}e^{\eta'}\rangle,
	\end{equation}
for all $i\in \{1,\dots,m\}$, $\gamma,\gamma'\in M_{mp}(\Z_2)$ and $\eta,\eta'\in \Z_2^p$. This means that the operators $D_i$ and $\Theta_i$ are each others Hermitian adjoints.

\subsection{Partitions and Young tableaux}
\label{sec2.2}

We let $\mathbb{P}$ denote the set of partitions, that is the set of finite non-increasing sequences on non-negative integers.
\begin{equation}
\mathbb{P}=\big\{\lambda=(\lambda_1,\dots,\lambda_k): k\in\N, \lambda_1\geq\cdots\geq\lambda_k\geq 0\big\}.
	\end{equation}
The length $\ell(\lambda)$ of the partition $\lambda$ is the number of non-zero entries of $\lambda$. We consider two partitions to be equal if all their non-zero entries agree, meaning that they only differ by the number of zeroes at the tail end of the sequences.
A partition $\lambda\in\mathbb{P}$ can be described as a diagram of empty boxes known as a Young diagram, with $\lambda_i$ being the number of boxes in the $i$'th row. 
For example, if $\lambda=(\lambda_1,\lambda_2,\lambda_3)=(4,3,1)$ then the corresponding Young diagram is 
\begin{equation}
\ytableausetup{centertableaux,boxsize=1.1em}
\lambda =
\ydiagram{4,3,1}.
	\end{equation}
To each partition $\lambda\in \mathbb{P}$ we associate the conjugate partition $\lambda'\in \mathbb{P}$, whose $i$'th entry $\lambda_i'$ is defined to be the number of boxes in the $i$'th column of the Young diagram of $\lambda$. So if $\lambda=(4,3,1)$, then $\lambda'=(3,2,2,1)$.
From this perspective $\ell(\lambda)$ and $\ell(\lambda')$ describe the number of rows and columns in the Young diagram $\lambda$ respectively.

A Young tableau of shape $\lambda$ with entries in $\{1,\dots,m\}$ is then a filling of the Young diagram of shape $\lambda$ by numbers from the set $\{1,\dots,m\}$. 
We denote the set of Young tableaux with entries in $\{1,\dots,m\}$ by $\mathbb{E}$.
As an example a Young tableau of shape $(4,3,1)$ and with $m= 4$ we consider
\begin{equation}
\label{sec4_eq_example-Young-tableau}
\ytableausetup{centertableaux,boxsize=1.1em}
\ytableaushort{2431,211,4}\in \mathbb{E}.
	\end{equation}
The weight of a Young tableau $A\in\mathbb{E}$ is then the $m$-tuple $\mu\in\N_0^m$, with $\mu_i$ being the number of times the entry $i$ appears in $A$. The weight of the Young tableau in \eqref{sec4_eq_example-Young-tableau} would then be $(3,2,1,2)$.
For any Young tableau $A\in \mathbb{E}$ we let $\lambda_A$ and $\mu_A$ denote its shape and weight respectively.

We will primarily be interested in the subset $\mathbb{Y}\subset\mathbb{E}$ consisting of semistandard (s.s.) Young tableaux. A Young tableau is called semistandard if its entries are non-decreasing from left to right along each row and increasing from top to bottom along each column. 
We consider for $m=4$ two examples of s.s.\ Young tableaux of shape $(4,3,1)$ and weight $(3,2,1,2)$:
\begin{equation}
\label{sec4_eq_example-ss-Young-tableau}
\ytableausetup{centertableaux,boxsize=1.1em}
\ytableaushort{1112,234,4}\in\mathbb{Y}
\quad\text{ and }\quad
\ytableaushort{1114,223,4}\in\mathbb{Y}.
	\end{equation}

\subsection{Subtableaux and a total ordering of $\mathbb{Y}$}
\label{sec2.3}

Given $A\in\mathbb{Y}$ and $k\in \{1,\dots,m\}$ we define the $k$'th subtableau of $A$ to be the tableau $A^k\in\mathbb{Y}$ obtained by truncating $A$ to only the entries containing the numbers $1,\dots,k$. We take $A^0$ to be the empty tableau.
The following example illustrates subtableaux,
\begin{equation}
A=
\ytableausetup{centertableaux,boxsize=1.1em}
\ytableaushort{1124,234}
\quad
\implies
A^4=A,
\quad
A^3=
\ytableausetup{centertableaux,boxsize=1.1em}
\ytableaushort{112,23}, 
\quad
A^2=
\ytableausetup{centertableaux,boxsize=1.1em}
\ytableaushort{112,2}
\text{ and }
A^1=
\ytableausetup{centertableaux,boxsize=1.1em}
\ytableaushort{11}\ .
	\end{equation}

We endow both $\N_0^m$ and $\mathbb{P}$ with the graded lexicographic ordering, both denoted by $<$. See \Cref{appA} for more details. The total ordering on $\mathbb{Y}$ can now be defined as follows.

Given $A,B\in \mathbb{Y}$ we write $A<B$ if $\mu_A<\mu_B$ in $\N_0^m$, or if $\mu_A=\mu_B$ and there exists $k\in\{1,\dots,m\}$ such that
\begin{equation}
\label{sec2_eq_ordering}
\lambda_{A^{l}}=\lambda_{B^{l}} \text{ and } \lambda_{A^k}<\lambda_{B^k}\text{ in } \mathbb{P},
	\end{equation}
for all $l<k$.
The relation $<$ on $\mathbb{Y}$ defined above is a total order. This follows from the fact that the graded lexicographic order gives a total ordering of both $\N_0^m$ and $\mathbb{P}$.
The following example illustrates how this ordering applies to the 13 s.s. Young tableaux of weight $\mu=(2,1,1,1)$.
\begin{equation*}
\tiny
\ytableausetup{centertableaux,boxsize=1.1em}
\ytableaushort{11,2,3,4}\quad<\quad\ytableaushort{11,24,3}\quad<\quad
\ytableaushort{114,2,3}\quad<\quad\ytableaushort{11,23,4}\quad<\quad
\ytableaushort{114,23}\quad<\quad\ytableaushort{113,2,4}\quad<\quad
\ytableaushort{113,24}\quad<\quad\ytableaushort{1134,2}\quad<\quad\\
	\end{equation*}
\begin{equation*}
\tiny
\ytableausetup{centertableaux,boxsize=1.1em}
<\quad
\ytableaushort{112,3,4}\quad<\quad
\ytableaushort{112,34}\quad<\quad\ytableaushort{1124,3}\quad<\quad
\ytableaushort{1123,4}\quad<\quad\ytableaushort{11234}.
	\end{equation*}

\section{The Lie algebra generated by Grassmannian Dirac operators and vector variables}

In \cite{Slupinski-1996} it was observed that the Lie algebra $\mathfrak{sl}(2)\cong \mathfrak{so}(3)$ can be realized using one Grassmannian Dirac operator and one Grassmannian vector variable. We observe in \Cref{sec3_theo_so(2m+1)} that this result generalizes when one considers $m$ Grassmannian Dirac operators and $m$ Grassmannian vector variables. This realization of has, to the best of the authors' knowledge, not been presented in literature before.

\begin{theo}
\label{sec3_theo_so(2m+1)}
The $2m$ operators $D_i$ and $\Theta_i$, for $i\in\{1,\dots,m\}$, acting on $\mathcal{B}$ satisfy the following commutator relations
\begin{equation}
\label{sec3_eq_so(2m+1)-relations}
\begin{split}
[[D_j,\Theta_k],\Theta_l]&=-2\delta_{jl}\Theta_k,\\
[[D_j,D_k],\Theta_l]&=2\delta_{kl}D_j-2\delta_{jl}D_k, \\
[[D_j,D_k],D_l]&=0, 
	\end{split}
\qquad
\begin{split}
[[D_j,\Theta_k],D_l]&=2\delta_{kl}D_j,\\
[[\Theta_j,\Theta_k],D_l]&=2\delta_{kl}\Theta_j-2\delta_{jl}\Theta_k, \\
[[\Theta_j,\Theta_k],\Theta_l]&=0,
	\end{split}
	\end{equation}
for all $j,k,l\in \{1,\dots,m\}$. They thus generate the Lie algebra $\mathfrak{so}(2m+1)$.
	\end{theo}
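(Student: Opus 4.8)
The plan is to split the proof into two independent tasks: first, to verify the six triple-commutator identities in \eqref{sec3_eq_so(2m+1)-relations} by direct computation on $\mathcal{B}$; and second, to recognise these identities as a presentation of $\mathfrak{so}(2m+1)$.

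For the computational part, the decisive simplification is to evaluate the three \emph{inner} commutators $[D_j,\Theta_k]$, $[D_j,D_k]$ and $[\Theta_j,\Theta_k]$ first. Writing $D_j=\sum_\alpha\partial_{j\alpha}e_\alpha$ and $\Theta_k=\sum_\beta\theta_{k\beta}e_\beta$ and expanding, each commutator becomes a double sum over $\alpha,\beta$ of a Grassmann operator tensored with $e_\alpha e_\beta$ or $e_\beta e_\alpha$. I would split this into the diagonal part $\alpha=\beta$, where the Clifford relation \eqref{sec2_eq_clifford-relation} gives $e_\alpha^2=1$, and the off-diagonal part $\alpha\ne\beta$, where $e_\alpha e_\beta+e_\beta e_\alpha=0$ forces a cancellation. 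I expect every off-diagonal contribution to vanish and the Clifford factor to disappear from the diagonal, leaving operators that act only on the factor $\Lambda[\C^{mp}]$. Concretely the anticommutation relations $\{\partial_{i\alpha},\theta_{j\beta}\}=\delta_{ij}\delta_{\alpha\beta}$ should yield
\[
[D_j,\Theta_k]=p\,\delta_{jk}-2\sum_{\alpha}\theta_{k\alpha}\partial_{j\alpha},\qquad [D_j,D_k]=2\sum_\alpha\partial_{j\alpha}\partial_{k\alpha},\qquad [\Theta_j,\Theta_k]=2\sum_\alpha\theta_{j\alpha}\theta_{k\alpha}.
\]
Bracketing these purely Grassmannian operators once more against $\Theta_l$ or $D_l$ then reduces entirely to $\{\partial_{i\alpha},\theta_{j\beta}\}=\delta_{ij}\delta_{\alpha\beta}$; for instance $[\sum_\alpha\theta_{k\alpha}\partial_{j\alpha},\Theta_l]=\delta_{jl}\Theta_k$ and $[\sum_\alpha\partial_{j\alpha}\partial_{k\alpha},\Theta_l]=\delta_{kl}D_j-\delta_{jl}D_k$, from which the six identities in \eqref{sec3_eq_so(2m+1)-relations} follow after collecting factors. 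The constant term $p\,\delta_{jk}$ is central and disappears under the outer bracket, so it does not affect the triple relations.

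For the identification, I would set $a_i^+:=\Theta_i$ and $a_i^-:=D_i$ and observe that \eqref{sec3_eq_so(2m+1)-relations} is precisely the system of parafermion triple relations; the classical theorem that the parafermion algebra on $m$ modes is isomorphic to $\mathfrak{so}(2m+1)$ (see \cite{Stoilova-VanderJeugt-2008}) then finishes the proof. A self-contained alternative is to take the generated Lie algebra to be spanned by the $2m$ operators $D_i,\Theta_i$ together with their pairwise brackets, to check using the relations and the Jacobi identity that this span is closed under commutation, and to match it with a Chevalley basis of type $B_m$: the $D_i,\Theta_i$ play the role of the $2m$ short-root vectors $\pm\varepsilon_i$, the inner commutators $[D_j,D_k]$, $[\Theta_j,\Theta_k]$, $[D_j,\Theta_k]$ with $j\ne k$ the long-root vectors $\pm\varepsilon_j\pm\varepsilon_k$, and the diagonal $[D_j,\Theta_j]$ the Cartan generators.

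The routine relation-checking is the easy part; the genuine obstacle lies in the identification. In the self-contained route one must show that the bracket-closed span has dimension exactly $m(2m+1)=\dim\mathfrak{so}(2m+1)$ --- that is, that the listed operators are linearly independent in this representation and that no further brackets escape the span --- and that the structure constants reproduce $B_m$ rather than a quotient or a central extension. The central constant $p\,\delta_{jk}$ in $[D_j,\Theta_j]$ must be handled with care here: it reflects that the Cartan elements act with a $p$-dependent shift, but since each generator satisfies $D_j=\tfrac12[[D_j,\Theta_j],D_j]$ and hence lies in the derived algebra, the generated algebra is perfect and no spurious one-dimensional centre is created, consistent with the simplicity of $\mathfrak{so}(2m+1)$.
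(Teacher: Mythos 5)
Your proposal is correct and takes essentially the same route as the paper: the paper's proof likewise consists of verifying the six triple relations by direct computation (which it calls ``simple yet tedious'' and leaves to the reader --- your inner-commutator formulas $[D_j,\Theta_k]=p\,\delta_{jk}-2\sum_\alpha\theta_{k\alpha}\partial_{j\alpha}$, $[D_j,D_k]=2\sum_\alpha\partial_{j\alpha}\partial_{k\alpha}$, $[\Theta_j,\Theta_k]=2\sum_\alpha\theta_{j\alpha}\theta_{k\alpha}$ are exactly the right intermediate step and all check out), and then citing the parafermion literature \cite{Kamefuchi-Takahashi-1962,Ohnuki-Kamefuchi-1982,Ryan-Sudarshan-1963,Stoilova-VanderJeugt-2008} for the identification of the algebra presented by these relations with $\mathfrak{so}(2m+1)$. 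One small caveat on your optional self-contained route: perfectness of the generated algebra does not by itself exclude a centre (perfect Lie algebras admit nontrivial central extensions); the clean way to rule out a proper quotient or extension is to note that, given the cited presentation result, the operator algebra is a nonzero homomorphic image of the simple Lie algebra $\mathfrak{so}(2m+1)$ and hence isomorphic to it.
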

\begin{proof}
The Lie algebra $\mathfrak{so}(2m+1)$ can be identified as the algebra with $2m$ generators satisfying the relations \eqref{sec3_eq_so(2m+1)-relations}, see \cite{Kamefuchi-Takahashi-1962,Ohnuki-Kamefuchi-1982,Ryan-Sudarshan-1963} or~\cite{Stoilova-VanderJeugt-2008} for an identification with $\mathfrak{so}(2m+1)$ root vectors. Therefore it remains only to show that $D_i$ and $\Theta_i$, for $i\in\{1,\dots,m\}$, satisfy these relations. Proving this is a matter of simple yet tedious calculations and is thus left to the reader.
	\end{proof}
	
Endowing this realization with the unitary structure given by $D_i^*:=\Theta_i$, for all $i\in\{1,\dots,m\}$, it is clear that $\mathcal{B}$ has the structure of a unitary $\mathfrak{so}(2m+1)$-module.

Note that the relations \eqref{sec3_eq_so(2m+1)-relations} are the so-called triple relations of the parafermionic creation and annihilation operators used in parastatistical field theories, see \cite{Green-1953,Ohnuki-Kamefuchi-1982}. When $p=1$ these relations reduce to those of the ordinary fermionic creation and annihilation operators.

The Lie algebra $\mathfrak{so}(2m+1)$ contains an $m$ dimensional Cartan subalgebra $\mathfrak{h}$ with basis consisting of the elements
\begin{equation}
h_i:= -\frac{1}{2}[D_i,\Theta_i],
	\end{equation}
for all $i\in\{1,\dots,m\}$. We let $\epsilon_i\in \mathfrak{h}^*$, for $i\in\{1,\dots,m\}$, be the corresponding dual basis. 
Given a weight $\mu\in \mathfrak{h}^*$ we can write $\mu=\sum_{i=1}^m \mu_i\epsilon_i$, or more simply $(\mu_1,\dots,\mu_m)$.
The root system of $\mathfrak{so}(2m+1)$ is
\begin{equation}
\big\{\epsilon_i\pm\epsilon_j, \pm \epsilon_k: i,j,k\in\{1,\dots,m\}, i\neq j\big\},
	\end{equation}
with simple roots $\{\epsilon_1-\epsilon_2,\dots,\epsilon_{m-1}-\epsilon_m,\epsilon_m\}$. The positive root vectors are then~\cite{Stoilova-VanderJeugt-2008}  
\begin{equation}
\label{sec3_eq_positive-root-vectors}
\big\{[\Theta_i,\Theta_j],[\Theta_i,D_j],\Theta_k: i,j,k\in\{1,\dots,m\}, i< j\big\}.
	\end{equation}

\section{A simple module of polynomials in the $\Theta_i$'s}
\label{sec4}

Due to \Cref{sec3_theo_so(2m+1)} we know that the operators $\Theta_i$ and $D_i$, for $i\in\{1,\dots,m\}$, generate a copy of the Lie algebra $\mathfrak{so}(2m+1)$. This allows us to decompose the module $\mathcal{B}$ into a direct sum of simple $\mathfrak{so}(2m+1)$-modules. 
For the remainder of this paper we study a component in this decomposition, which carries information about the Grassmannian Dirac operators and vector variables. We consider in particular the subspace of $\mathcal{B}$ consisting of polynomials in the Grassmannian vector variables $\Theta_1,\dots,\Theta_m$:
\begin{equation}
	\mathcal{P}:= \Span\big\{ \Theta_{i_1}\cdots \Theta_{i_k}(1): k\in\N_0, i_1,\dots,i_k\in\{1,\dots,m\}\big\}.
	\label{41}
	\end{equation}
We show that this is a simple module of $\mathfrak{so}(2m+1)$, see \Cref{sec4_prop_P-simple-module}, and present the character formula and weight space dimensions of $\mathcal{P}$, see \eqref{sec4_eq_character-formula-1} and \eqref{sec4_eq_weight-space-dimension}. In \Cref{sec5} we construct a basis for $\mathcal{P}$ consisting of polynomials in the Grassmannian vector variables. The corresponding problem for the usual Euclidean vector variables was solved in \cite{Bisbo-DeBie-VanderJeugt-2021}.
As a vector space $\mathcal{P}$ is isomorphic to the unital associative algebra $\mathcal{A}(\Theta)$ generated by the Grassmannian vector variables $\Theta_1,\dots,\Theta_m$ considered as operators acting on $\mathcal{B}$. 
The isomorphism is given concretely by letting the operators in $\mathcal{A}(\Theta)$ act on the constant polynomial $1\in \mathcal{P}$.
The Hermitian conjugate gives an anti-isomorphism between $\mathcal{A}(\Theta)$ and the unital associative algebra $\mathcal{A}(D)$ generated by the Grassmannian Dirac operators $D_1,\dots,D_m$. 
The point of these observations is that by constructing a basis for the $\mathfrak{so}(2m+1)$-module we automatically obtain concrete bases, in the vector space sense, for the algebras $\mathcal{A}(\Theta)$ and $\mathcal{A}(D)$. See \cite{Sommen-Acker-1992,Sommen-1997} for work dealing with such algebras and the operators therein for the case of the usual Euclidean Dirac operators and vector variables.
The space $\mathcal{P}$ is furthermore of interest in the study of parastatistical field theories. 

\begin{prop}
\label{sec4_prop_P-simple-module}
The space $\mathcal{P}$ is a simple unitary lowest weight module of $\mathfrak{so}(2m+1)$ with lowest weight vector $1\in \mathcal{P}$ of weight $(-\frac{p}{2},\dots,-\frac{p}{2})$.
	\end{prop}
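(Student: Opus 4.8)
The plan is to establish the four ingredients of the statement --- that $\mathcal P$ is a submodule, that $1$ is a lowest weight vector, that its weight is $(-\tfrac p2,\dots,-\tfrac p2)$, and that $\mathcal P$ is simple --- treating everything but simplicity by direct computation and letting the unitary structure carry the last step. I would begin by evaluating the Cartan generators on $1$. Using $D_i(1)=0$, $\Theta_i(1)=\sum_{\alpha=1}^p\theta_{i\alpha}e_\alpha$, together with $\partial_{i\beta}\theta_{i\alpha}=\delta_{\alpha\beta}$ and $e_\alpha^2=1$, a one-line computation gives $[D_i,\Theta_i](1)=p\,1$, hence $h_i(1)=-\tfrac{p}{2}\,1$ for each $i$; thus $1$ is a weight vector of weight $(-\tfrac p2,\dots,-\tfrac p2)$. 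To see it is a \emph{lowest} weight vector I would check that it is annihilated by every negative root vector, i.e.\ by the Hermitian conjugates $D_k$, $[D_j,D_i]$ and $[\Theta_j,D_i]$ (for $i<j$) of the positive root vectors of \eqref{sec3_eq_positive-root-vectors}. This is immediate: $D_k(1)=0$ disposes of the first two, and for $i<j$ one has $[\Theta_j,D_i](1)=-D_i\Theta_j(1)=-\delta_{ij}\,p\,1=0$.

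Next I would prove that $\mathcal P$ is a submodule, equivalently $\mathcal P=U(\mathfrak{so}(2m+1))\cdot 1$. Invariance under the $\Theta_i$ is built into the definition \eqref{41}, so the only issue is $D_j\mathcal P\subseteq\mathcal P$. I would first observe that each operator $[D_j,\Theta_k]$ preserves $\mathcal P$: the triple relation $[[D_j,\Theta_k],\Theta_l]=-2\delta_{jl}\Theta_k$ lets one commute $[D_j,\Theta_k]$ to the right through any word $\Theta_{i_1}\cdots\Theta_{i_l}$, producing only further words in the $\Theta$'s, and finally $[D_j,\Theta_k](1)\in\C\,1$. An easy induction on the $\Theta$-degree then yields $D_j\mathcal P\subseteq\mathcal P$ via $D_j\Theta_{i_1}w=[D_j,\Theta_{i_1}]w+\Theta_{i_1}D_jw$, where the first summand lies in $\mathcal P$ by the previous observation and the second by the inductive hypothesis (base case $D_j(1)=0$). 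Hence $\mathcal P$ is a lowest weight module generated by $1$.

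For simplicity I would first record that $\mathcal P$ is unitary, inheriting the positive-definite form of $\mathcal B$ and the adjoint relation $\Theta_i^*=D_i$; in particular each $h_i$ is self-adjoint, so distinct weight spaces of $\mathcal P$ are orthogonal. The crucial lemma is that the only vectors of $\mathcal P$ annihilated by all the $D_k$ are the scalar multiples of $1$. Indeed, if $D_k v=0$ for all $k$, then for every $u\in\mathcal P$ one has $\langle v,\Theta_k u\rangle=\langle D_k v,u\rangle=0$, so $v$ is orthogonal to $\sum_k\Theta_k\mathcal P$. Since $\sum_k\Theta_k\mathcal P$ exhausts exactly the weight spaces of weight strictly above $(-\tfrac p2,\dots,-\tfrac p2)$, it is precisely the orthogonal complement of $\C\,1$ in $\mathcal P$, and therefore $v\in\C\,1$.

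Given the lemma, simplicity follows: let $W\subseteq\mathcal P$ be a nonzero submodule. Being $\mathfrak h$-invariant and finite-dimensional, $W$ contains a nonzero weight vector, and since the $D_k$ strictly lower the $\Theta$-degree, repeatedly applying them terminates in a nonzero vector of $W$ killed by all $D_k$; by the lemma this vector is a multiple of $1$, so $1\in W$ and hence $W\supseteq U(\mathfrak{so}(2m+1))\cdot 1=\mathcal P$. I expect this simplicity step to be the main obstacle: a lowest weight vector of a submodule a priori only needs weight $\ge(-\tfrac p2,\dots,-\tfrac p2)$, and it is precisely the unitarity --- through the adjoint identity $\Theta_i^*=D_i$ --- that forces any $D$-annihilated vector back onto the line $\C\,1$, thereby pinning down the lowest weight and yielding irreducibility.
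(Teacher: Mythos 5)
Your proof is correct, and up to the simplicity step it follows the same lines as the paper's: the paper likewise computes $D_i(1)=0$ and $h_i(1)=-\frac{p}{2}$ by direct calculation and deduces invariance of $\mathcal{P}$ from the relations \eqref{sec3_eq_so(2m+1)-relations}; your induction for $D_j\mathcal{P}\subseteq\mathcal{P}$ just makes that deduction explicit. Where you genuinely diverge is simplicity. The paper settles it in one sentence via complete reducibility: $\mathcal{B}$ is finite-dimensional, hence semisimple as an $\mathfrak{so}(2m+1)$-module by Weyl's theorem, and a submodule generated by a single lowest weight vector of a semisimple module is simple. You avoid Weyl's theorem entirely and let the Hilbert space structure do the work: self-adjointness of the $h_i$ gives orthogonality of distinct weight spaces, the adjunction $\Theta_k^*=D_k$ gives the sharper intermediate result $\{v\in\mathcal{P}:D_kv=0\ \text{for all}\ k\}=\C 1$, and a weight-lowering termination argument then forces any nonzero submodule to contain $1$. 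Your route is longer but self-contained, identifies the singular vectors of $\mathcal{P}$ as a byproduct, and is the kind of argument that survives in infinite-dimensional unitary lowest weight settings where complete reducibility is unavailable. The paper's route is shorter but leans on general theory, and as literally phrased (``generated from one element'') it is slightly elliptical: cyclicity alone does not imply simplicity inside a semisimple module --- for $\mathfrak{sl}(2)$, the sum of the highest weight vectors of two non-isomorphic simple summands generates the whole decomposable module --- so one must also use, as both you and the paper do beforehand, that the generator $1$ is a weight vector annihilated by all lowering operators.
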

\begin{proof}
Recall that $\partial_{i\alpha}(1)=0$, for all $i\in\{1,\dots,m\}$ and $\alpha\in\{1,\dots,p\}$.  Using this a short calculation shows that $D_i(1)=0$ and $h_i(1)=-\frac{p}{2}$, for all $i\in\{1,\dots,m\}$. The relations \eqref{sec3_eq_so(2m+1)-relations} imply that $\mathcal{P}$ is invariant under the action of the simple Lie algebra $\mathfrak{so}(2m+1)$. Finally, the simplicity of $\mathcal{P}$ follows since it is a submodule, generated from one element, of the module $\mathcal{B}$ which is finite and thus semisimple.
	\end{proof}
In \cite{Stoilova-VanderJeugt-2008} the module $\mathcal{P}$ was studied and the following character formula was obtained:
\begin{equation}
\label{sec4_eq_character-formula-1}
\Char \mathcal{P}
	= (t_1\cdots t_m)^{-p/2}\sum_{\substack{\lambda\in\mathbb{P}, \\ \ell(\lambda') \leq p}} s_\lambda(t_1,\dots,t_m),
	\end{equation}
where $s_\lambda$ denotes the Schur function indexed by the partition $\lambda$ and $t_i$ denotes the formal exponential $e^{\epsilon_i}$.
Applying the monomial expansion of $s_\lambda$ to \eqref{sec4_eq_character-formula-1}, see \cite{Macdonald-1995}, we get
\begin{equation}
\label{sec4_eq_character-formula-2}
\Char \mathcal{P}
	= (t_1\cdots t_m)^{-p/2}\sum_{\substack{\lambda\in\mathbb{P}, \\ \ell(\lambda') \leq p}} 
	\sum_{\mu\in \N_0^m} K_{\lambda\mu}t_1^{\mu_1}\cdots t_m^{\mu_m},
	\end{equation}
where $K_{\lambda\mu}$ denotes the Kostka number:
\begin{equation}
K_{\lambda\mu}:=\#\big\{\text{ s.s.\ Young tableaux in } \mathbb{Y} \text{ of shape } \lambda \text{ and weight $\mu$ } \big\}.
	\end{equation}
The character formula \eqref{sec4_eq_character-formula-2} makes it clear that the set of weights of $\mathcal{P}$ is given by
\begin{equation}
\big\{ \mu-p/2:=(\mu_1-p/2,\dots,\mu_m-p/2): \mu\in\{0,\dots,p\}^m\big\}
	\end{equation}
and that the corresponding weight space dimensions are 
\begin{equation}
\label{sec4_eq_weight-space-dimension}
\begin{split}
\dim \mathcal{P}_{\mu-\frac{p}{2}} 
	&= \sum_{\substack{\lambda\in\mathcal{P}, \\ \ell(\lambda') \leq p}}  K_{\lambda\mu}\\
	&= \#\big\{ \text{ s.s. Young tableaux of weight $\mu$ and with at most $p$ columns }\big\}.  
	\end{split}
	\end{equation}

\section{A basis for the module $\mathcal{P}$}
\label{sec5}

We now turn to the construction of an appropriate basis for the module $\mathcal{P}$ consisting of vectors $\omega_A$ indexed by s.s.\ Young tableaux.
Of course, other bases for $\mathfrak{so}(2m+1)$-modules have been considered in the literature. We briefly present an overview of known results.

In \cite{Stoilova-VanderJeugt-2008} a Gel'fand-Zetlin basis for $\mathcal{P}$, parameterized by Gel'fand-Zetlin patterns, was considered and matrix elements of the $\mathfrak{so}(2m+1)$-action on this basis were calculated. 
In a more general context, monomial bases have been constructed for $\mathfrak{so}(2m+1)$-modules. 
The crystal bases coming from the canonical basis of the quantum group $U_q(\mathfrak{g})$ presents a well known method for constructing monomial bases for finite dimensional modules of a semisimple Lie algebra $\mathfrak{g}$, see \cite{Lusztig-1990-1,Lusztig-1990-2}. Another method was developed and applied to simple modules of Lie algebras of type A and C in the papers \cite{Feigin-Fourier-Littlemann-2011-1,Feigin-Fourier-Littlemann-2011-2,Feigin-Fourier-Littlemann-2017}. Application to cases B and D can be found in \cite{Gornitskii-2019,Makhlin-2019}.

The vectors of a monomial basis are defined as monomials in the negative root vectors of $\mathfrak{so}(2m+1)$ acting on the highest weight vector of the relevant module, or equivalently as monomials in the positive root vectors, \eqref{sec3_eq_positive-root-vectors}, of $\mathfrak{so}(2m+1)$ acting on the lowest weight vector.

The basis that we wish to construct here for $\mathcal{P}$ differs from all known bases.  It differs specifically from the monomial bases in that every basis element is defined as a polynomial in only the positive generators, $\Theta_1,\dots,\Theta_m$, of $\mathfrak{so}(2m+1)$ acting on the lowest weight vector. This means that the remaining positive root vectors $[\Theta_i,\Theta_j]$ and $[D_i,\Theta_j]$, for $i<j$, are not needed for the definition of the basis. 
In the present context, with $\mathcal{P}$ given by~\eqref{41}, this is natural to consider.

Such a basis for $\mathcal{P}$ is also important in the context of parafermions: the basis vectors expressed as polynomials in the $\Theta_i$ acting on $1$ translate to polynomials in the parafermionic creation operators $a_i^+$ acting on a vacuum state, and thus form an a new and interesting basis of the parafermionic Fock space of order~$p$~\cite{Ohnuki-Kamefuchi-1982,Stoilova-VanderJeugt-2008}.

In \Cref{sec5.1} we define the vector $\omega_A$, given a Young tableau $A$. 
Following that we prove in \Cref{sec5.2} that such vectors with $A\in\mathbb{Y}$ and $\ell(\lambda_A')\leq p$ form a basis for $\mathcal{P}$. 
The main difficulty of the proof lies in the identification of a certain leading monomial of $\omega_A$ and proving that it `respects' the total order on $\mathbb{Y}$. This is the content of \Cref{sec5_prop_leading-term}, the proof of which we postpone until \Cref{sec5.4}. In \Cref{sec5.3} we define row distinct and $A$-restricted Young tableaux, and use these notions to obtain monomial expansions of $\omega_A$ necessary for proving \Cref{sec5_prop_leading-term}.

\subsection{Construction of the vectors $\omega_A$}
\label{sec5.1}

We are interested in constructing the vectors $\omega_A$, for $A\in\mathbb{E}$, such that each entry $i$ in $A$ correspond to an occurrence of the Grassmannian vector variable $\Theta_i$. 
To do so, we let $A(k,l)$ denote the entry of $A$ in the $k$'th row and $l$'th column. If the shape of $A$ is $\lambda$, then $(k,l)$ runs over the coordinates of the boxes of the Young diagram of $\lambda$. In a slight abuse of notation we shall also use $\lambda$ to denote the set of these coordinates:
\begin{equation}
\lambda=\big\{ (k,l) : k\in\{1,\dots,\ell(\lambda)\}, l\in\{1,\dots, \lambda_k\}\big\}.
	\end{equation}

For each $\lambda\in \mathbb{P}$ we consider the permutation group
\begin{equation}
S_\lambda=S_{\lambda_1}\times\cdots S_{\lambda_{\ell(\lambda)}}.
	\end{equation}
The purpose of this group is to permute the rows of any given Young tableau $A\in\mathbb{E}$ whose shape is $\lambda$.
The group $S_\lambda$ is called the Young subgroup associated to the partition $\lambda$. 
Such permutation groups are widely used in the classification of irreducible representations of the symmetric group, see \cite{James-1978}. 
The group $S_\lambda$ acts on the set of Young tableaux of shape $\lambda$ in the following manner.	
Given a Young tableau $A\in\mathbb{E}$ of shape $\lambda$ and a permutation $\tau=(\tau_1,\dots,\tau_{\ell(\lambda)})\in S_\lambda$ we define the row permuted Young tableau $A^\tau\in\mathbb{E}$ to be the tableau with entries 
\begin{equation}
A^\tau(k,l):=A(k,\tau_k(l)),
	\end{equation}
for all $(k,l)\in\lambda$. 
We can now define the vector $\omega_A$ as follows. For any $A\in\mathbb{E}$ of shape $\lambda\in\mathbb{P}$ let 
\begin{equation}
\Theta_A:=
	\big(\Theta_{A(1,1)}\cdots\Theta_{A(1,\lambda_1)}\big)
	\big(\Theta_{A(2,1)}\cdots\Theta_{A(2,\lambda_2)} \big)
	\cdots
	\big(\Theta_{A(\ell(\lambda),1)}\cdots\Theta_{A(\ell(\lambda),\lambda_{\ell(\lambda)})} \big)
	\in \mathcal{A}(\Theta)
	\end{equation}
and
\begin{equation}
\label{sec5_eq_defi-omegaA}
\omega_A:=
	\frac{1}{A!}
	\sum_{\tau\in S_\lambda} \Theta_{A^\tau}(1)\in \mathcal{P},
	\end{equation}
where $A!$ is the following factorial 
\begin{equation}
A!:=\lambda_1!\cdots\lambda_{\ell(\lambda)}!\prod_{i=1}^m\prod_{k=1}^{\ell(\lambda)} 
	\big( (\lambda_{A^{i}})_k-(\lambda_{A^{i-1}})_k \big)!.
	\end{equation}
Here it is interesting to note that
\begin{equation}
\label{sec4_eq_i's-in-the-k'th-row}
(\lambda_{A^{i}})_k-(\lambda_{A^{i-1}})_k=\#\{\text{ $i$'s in the $k$'th row of $A$ }\}.
	\end{equation}
	
We remark at this point that $\omega_A\neq 0$ if and only if $\ell(\lambda')\leq p$. This will appear later as a direct consequence of \Cref{sec5_lemm_monomial-expansion-tableaux-1}. Keeping \eqref{sec4_eq_weight-space-dimension} in mind, this means that we have defined the right number of non-zero vectors $\omega_A$, for $A\in\mathbb{Y}$ and $\ell(\lambda_A')\leq p$, to form a basis for $\mathcal{P}$. Of course, it still remains to prove linear independence, which will take up the rest of this section.

As an example we calculate $\omega_A$, where
\begin{equation}
\label{sec5_eq_example-A}
\ytableausetup{centertableaux,boxsize=1.1em}
	A=\ytableaushort{113,22},
	\end{equation}
in which case
\begin{equation}
\label{sec5_eq_example-omegaA}
\begin{split}
\omega_A
	&= 
	\frac{1}{3!2!1!2!2!}
	\left(
	2!2!\Theta_{\tiny\ytableaushort{113,22}}
	+2!2!\Theta_{\tiny\ytableaushort{131,22}}
	+2!2!\Theta_{\tiny\ytableaushort{311,22}}
	\right)
	\\&=
	\frac{1}{12}
	\left(
	\Theta_1^2\Theta_3\Theta_2^2 + \Theta_1\Theta_3\Theta_1\Theta_2^2 +\Theta_3\Theta_1^2\Theta_2^2
	\right).
	\end{split}
	\end{equation}	
Expanding $\omega_A$ into terms of the form $\theta^\gamma e^\eta$ is at this moment very tedious and computationally heavy.
In \Cref{sec5.3} we will see that \Cref{sec5_lemm_monomial-expansion-tableaux-2} makes such calculations much more manageable, as can be seen in the example in equation \eqref{sec5_eq_example-expansion-A-restricted}.

\subsection{The leading monomial of $\omega_A$ and proof of basis}
\label{sec5.2}

For the remainder of this section we will focus our attention on vectors $\omega_A$ corresponding to s.s. Young tableaux $A\in\mathbb{Y}$ with $\ell(\lambda_A')\leq p$.
Recall from \Cref{sec2} that the monomials $\theta^\gamma e^\eta$, for $\gamma\in M_{mp}(\Z_2)$ and $\eta\in\Z_2^p$, form a basis for the module $\mathcal{B}$. This allows us to make the following expansion
\begin{equation}
\label{sec5_eq_monomial-expansion}
\omega_A= 
	\sum_{\gamma,\eta} \langle \theta^\gamma e^\eta, \omega_A\rangle \theta^\gamma e^\eta,
	\end{equation}
for all $A\in\mathbb{Y}$ with $\ell(\lambda_A')\leq p$.

Each s.s. Young tableau $A\in\mathbb{Y}$ with $\ell(\lambda_A')\leq p$ can be identified with the matrix $\gamma_A\in M_{mp}(\Z_2)$ whose entries are given as follows
\begin{equation}
\label{sec5_eq_leading-term-exponent-matrix}
(\gamma_A)_{ij}:=\#\{ \text{ $i$'s in the $j$'th column of $A$ }\},
	\end{equation}
for all $i\in\{1,\dots,m\}$ and $j\in\{1,\dots,p\}$. Using this we define the {\em leading monomial} of $\omega_A$ to be $\theta^{\gamma_A} e^{\lambda_A'}$. 
As an exponent of $e^{\lambda_A'}$, $\lambda_A'$ is considered modulo $2$, that is as an element of $\Z_2^p$. This is possible due to the relation \eqref{sec2_eq_clifford-relation}. As an example, consider the tableau $A$ from \eqref{sec5_eq_example-A} and assume that $p=3$. Then $\lambda_A'=(2,2,1)$ and $e^{\lambda_A'}=e_1^2e_2^2e_3=e_3$.

The following result tells us that the leading monomials `respect' the ordering of $\mathbb{Y}$ defined in \Cref{sec2.3} and that they always appear with coefficient $1$ in the expansion \eqref{sec5_eq_monomial-expansion} of their respective vectors.
To state the result we let $\eta_\gamma$, for $\gamma\in M_{mp}(\Z_2)$, denote the column sum of $\gamma$, that is
\begin{equation}
\eta_\gamma = 
	\left(
	\sum_{i=1}^m \gamma_{i1},\dots,\sum_{i=1}^m \gamma_{ip}
	\right).
	\end{equation}

\begin{prop}
\label{sec5_prop_leading-term}
Suppose $A\in \mathbb{Y}$ with $\ell(\lambda_A')\leq p$, then
\begin{equation}
\label{sec5_eq_leading-term}
\omega_A = 
	\sum_{\gamma\in M_{mp}(\Z_2)} \langle \theta^\gamma e^{\eta_\gamma}, \omega_A \rangle\theta^\gamma e^{\eta_\gamma}.
	\end{equation}
Additionally, if $B\in \mathbb{Y}$ with $\ell(\lambda_B')\leq p$ and $A<B$, then 
\begin{equation}
\big\langle \theta^{\gamma_A} e^{\lambda_A'}, \omega_A \big\rangle = 1
\quad\text{ and }\quad 
\big\langle \theta^{\gamma_A} e^{\lambda_A'}, \omega_B \big\rangle = 0.
	\end{equation}
	\end{prop}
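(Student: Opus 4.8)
The plan is to analyse the monomial expansion \eqref{sec5_eq_monomial-expansion} term by term. Expanding each $\Theta_{A^\tau}(1)$ by means of $\Theta_i=\sum_\alpha \theta_{i\alpha}e_\alpha$ and the tensor action, every resulting monomial arises by assigning to each box of $A^\tau$ a \emph{column label} $\alpha\in\{1,\dots,p\}$; a box carrying entry $i$ then contributes the matched pair $\theta_{i\alpha}\otimes e_\alpha$. Because these are matched, in any such monomial $\theta^\gamma e^\eta$ the number of factors $e_\alpha$ equals the number of $\theta$'s whose second index is $\alpha$, i.e.\ the $\alpha$-th column sum of $\gamma$; reducing modulo \eqref{sec2_eq_clifford-relation} gives $\eta=\eta_\gamma$. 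Since this holds for each individual monomial, it survives the summation over $\tau$ and the collection of like terms, which proves \eqref{sec5_eq_leading-term}.

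For the second part I would first record two reductions. A short computation shows that $h_i$ acts on $\theta^\gamma e^\eta$ with eigenvalue $\big(\sum_\alpha\gamma_{i\alpha}\big)-\tfrac p2$, so the weight of a monomial depends only on the row sums of $\gamma$; in particular $\theta^{\gamma_A}e^{\lambda_A'}$ has weight $\mu_A-\tfrac p2$, while $\omega_B$ is a weight vector of weight $\mu_B-\tfrac p2$. Hence $\langle\theta^{\gamma_A}e^{\lambda_A'},\omega_B\rangle$ can be nonzero only when $\mu_A=\mu_B$, which I assume henceforth. Next I would reparametrise the terms of $\omega_B$ by \emph{fillings} $\psi$, an assignment of a column label to each box of $B$, the attached monomial having $\theta$-support $\{(B(k,l),\psi(k,l))\}$. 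The decisive preliminary step is to show that, after summing over $S_{\lambda_B}$, only \emph{row-distinct} fillings (labels pairwise distinct along each row of $B$) survive: if a row contains two boxes with equal label, the transposition exchanging them is a sign-reversing involution on the symmetrisation (the two Grassmann factors anticommute while their equal Clifford factors $e_\alpha$ contribute no compensating sign), so the contributions cancel; and if the two entries also coincide the monomial already vanishes through $\theta_{i\alpha}^2=0$. I expect this row-distinct expansion, with its exact signs and the matching tailoring of the normalisation $A!$, to be the main obstacle, and it is precisely what the notions of \Cref{sec5.3} are designed to control.

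Granting the row-distinct expansion, the coefficient $\langle\theta^{\gamma_A}e^{\lambda_A'},\omega_A\rangle$ is obtained by summing the signs of the row-distinct fillings of $A$ whose support equals that of $\gamma_A$. Using semistandardness of $A$ (so that the column-respecting filling $\psi_0(k,l)=l$ is admissible) and the fact that $A!$ counts exactly these configurations with reinforcing signs, this signed sum equals $A!$, so the coefficient is $1$.

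For the vanishing I would argue combinatorially. If $\theta^{\gamma_A}$ occurs in $\omega_B$, then by the previous step there is a row-distinct filling $\psi$ of $B$ whose support equals that of $\gamma_A$. Grouping the boxes of $B$ by label decomposes $B$ into sets $S_\alpha$, where $S_\alpha$ carries exactly the entries of column $\alpha$ of $A$; row-distinctness makes each $S_\alpha$ meet every row of $B$ at most once, so the $S_\alpha$ are vertical strips. Restricting to entries $\le k$, the diagram $\lambda_{B^k}$ is partitioned into vertical strips of sizes $(\lambda_{A^k})'$, and by the Gale--Ryser theorem the existence of such a partition forces $\lambda_{B^k}\trianglelefteq\lambda_{A^k}$ in dominance order, for every $k$. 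But $A<B$ with $\mu_A=\mu_B$ means that at the first index $k_0$ where the subtableau shapes differ one has $\lambda_{A^{k_0}}<\lambda_{B^{k_0}}$ in graded lexicographic order; since dominance refines the lexicographic order and the two shapes have equal size, $\lambda_{B^{k_0}}\trianglelefteq\lambda_{A^{k_0}}$ would give $\lambda_{B^{k_0}}<\lambda_{A^{k_0}}$, contradicting $\lambda_{A^{k_0}}<\lambda_{B^{k_0}}$. Hence no such filling exists and $\langle\theta^{\gamma_A}e^{\lambda_A'},\omega_B\rangle=0$.
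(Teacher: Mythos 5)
Your treatment of \eqref{sec5_eq_leading-term} (matched pairs $\theta_{i\alpha}\otimes e_\alpha$ forcing $\eta=\eta_\gamma$), your weight-space reduction to the case $\mu_A=\mu_B$, and your sign-reversing involution producing the row-distinct expansion are all correct; the last of these is exactly the content of \Cref{sec5_lemm_monomial-expansion-tableaux-1} and \Cref{sec5_lemm_monomial-expansion-tableaux-2}, which the paper establishes by a direct computation with the row symmetrizers rather than an involution.

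The genuine gap is in your coefficient-$1$ step. The phrase ``the fact that $A!$ counts exactly these configurations with reinforcing signs'' assumes precisely what has to be proved: that the \emph{only} row-distinct fillings of $A$ whose support equals that of $\gamma_A$ are those obtained from $\psi_0(k,l)=l$ by permuting labels among equal-entry boxes inside a row (in the paper's language: $D_{\lambda_A}$ is the unique $C\in\mathbb{T}_A$ with $\theta_{AC}e_C=\pm\theta^{\gamma_A}e^{\lambda_A'}$). Admissibility of $\psi_0$ is the trivial half; without uniqueness, other fillings could each contribute $\pm 1$ and change the coefficient. This uniqueness is where semistandardness genuinely enters, and the paper spends half of \Cref{sec5.4} on it: by induction on $t$, once all boxes with entries $\le t$ carry their own column index as label, row-distinctness forces the labels of the entry-$(t+1)$ boxes in row $k$ to exceed $(\lambda_{A^t})_k$, while semistandardness, via $(\lambda_{A^{t+1}})_{k+1}\le(\lambda_{A^t})_k\le(\lambda_{A^{t+1}})_k$, guarantees that the only labels in the available pool exceeding $(\lambda_{A^t})_1$ are the columns of the entry-$(t+1)$ boxes of row $1$ itself; descending row by row then pins every entry-$(t+1)$ box to its own column. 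You need to supply this (or an equivalent) argument; as written, the crux is asserted, not proved.

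Your Gale--Ryser argument for the vanishing, on the other hand, is correct and is a genuinely different route; in fact it is more robust than the paper's own step. The paper passes from $\lambda_{A^s}<\lambda_{B^s}$ (graded lex) to $\lambda_{A^s}'>\lambda_{B^s}'$ (graded lex), calling the two statements equivalent, and then takes $t$ to be the first index where the conjugates differ. But conjugation does not reverse lexicographic order: $(3,3)<(4,1,1)$ while $(3,3)'=(2,2,2)<(3,1,1,1)=(4,1,1)'$. What the final counting actually needs is the failure of the dominance relation $\lambda_{A^s}'\trianglelefteq\lambda_{B^s}'$, i.e.\ some $t$ with $\sum_{j\le t}(\lambda_{B^s}')_j<\sum_{j\le t}(\lambda_{A^s}')_j$, and this does follow from $\lambda_{A^s}<\lambda_{B^s}$ since strict dominance between distinct equal-size partitions implies strict lexicographic comparison in the same direction. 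Your chain --- row-distinctness gives a $0$-$1$ incidence matrix with row sums $\lambda_{B^s}$ and column sums $\lambda_{A^s}'$, the easy (necessity) half of Gale--Ryser gives $\lambda_{A^s}'\trianglelefteq\lambda_{B^s}'$, conjugation gives $\lambda_{B^s}\trianglelefteq\lambda_{A^s}$, and dominance-implies-lex gives the contradiction --- is exactly this corrected counting, packaged cleanly. Two minor quibbles: the sets $S_\alpha$ need not be vertical strips, only sets meeting each row at most once (which is all the counting uses), and the standard phrasing is that lexicographic order refines dominance, not the other way around.
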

\begin{proof}
Proving this result is a rather technical endeavor, which relies on combinatorial properties of row distinct and $A$-restricted Young tableaux, and their relation to the vectors $\omega_A$. 
The proof of this proposition is postponed until \Cref{sec5.4}, before which row distinct and $A$-restricted Young tableaux will be defined and treated in detail in \Cref{sec5.3}.
	\end{proof}

As a consequence of Proposition \ref{sec5_prop_leading-term} we get linear independence of the $\omega_A$.
\begin{coro}
\label{sec5_coro_linear-independence}
The vectors $\omega_A$, for $A\in\mathbb{Y}$ with $\ell(\lambda_A')\leq p$, are linearly independent.
	\end{coro}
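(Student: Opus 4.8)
The plan is to deduce linear independence of the family $\{\omega_A : A\in\mathbb{Y},\ \ell(\lambda_A')\leq p\}$ directly from \Cref{sec5_prop_leading-term} by a standard triangularity argument with respect to the total order on $\mathbb{Y}$. The key observation is that each $\omega_A$ has a distinguished \emph{leading monomial} $\theta^{\gamma_A}e^{\lambda_A'}$, and the proposition tells us that this monomial appears in $\omega_A$ with coefficient $1$, while it is \emph{absent} from every $\omega_B$ with $A<B$. The map $A\mapsto\gamma_A$ is injective on s.s.\ Young tableaux with $\ell(\lambda_A')\leq p$, since from the column-content matrix $\gamma_A$ defined in \eqref{sec5_eq_leading-term-exponent-matrix} one can uniquely reconstruct $A$ (the semistandard condition forces the entries in each column to be the distinct row-indices $i$ with $(\gamma_A)_{ij}=1$, listed in increasing order). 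Hence distinct tableaux have distinct leading monomials.

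First I would set up a finite linear combination
\begin{equation}
\sum_{A} c_A\,\omega_A = 0,
\end{equation}
where the sum runs over a finite set $S$ of tableaux in $\mathbb{Y}$ with $\ell(\lambda_A')\leq p$ and the $c_A\in\C$ are scalars. Suppose, for contradiction, that not all $c_A$ vanish, and let $A_0$ be the \emph{smallest} tableau in $S$ (with respect to the total order $<$ of \Cref{sec2.3}) for which $c_{A_0}\neq 0$. I would then apply the antilinear-in-the-first-slot inner product $\langle\,\cdot\,,\,\cdot\,\rangle$ by pairing the identity against the leading monomial $\theta^{\gamma_{A_0}}e^{\lambda_{A_0}'}$ of $\omega_{A_0}$:
\begin{equation}
0=\Bigl\langle \theta^{\gamma_{A_0}}e^{\lambda_{A_0}'},\ \sum_{A\in S} c_A\,\omega_A\Bigr\rangle
 =\sum_{A\in S} c_A\,\bigl\langle \theta^{\gamma_{A_0}}e^{\lambda_{A_0}'},\ \omega_A\bigr\rangle,
\end{equation}
using linearity in the second argument.

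Next I would evaluate each term. By \Cref{sec5_prop_leading-term}, the term $A=A_0$ contributes $c_{A_0}\cdot 1=c_{A_0}$. For $A>A_0$ the proposition gives $\langle \theta^{\gamma_{A_0}}e^{\lambda_{A_0}'},\omega_A\rangle=0$ (applying the second displayed equation of the proposition with the roles of $A$ and $B$ played by $A_0$ and $A$, which is legitimate since $A_0<A$). For $A<A_0$, the coefficient $c_A$ is already zero by the minimality of $A_0$, so those terms drop out regardless of the pairing. All surviving terms therefore vanish except the $A_0$ term, leaving $c_{A_0}=0$, which contradicts the choice of $A_0$. Hence all $c_A=0$ and the vectors are linearly independent.

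The main obstacle is essentially bookkeeping rather than mathematics: one must verify that the triangularity furnished by \Cref{sec5_prop_leading-term} is genuinely ``upper unitriangular'' with respect to $<$, i.e.\ that the leading monomial of the minimal nonzero term is not accidentally cancelled by contributions from the larger tableaux. This is exactly guaranteed by the vanishing statement $\langle\theta^{\gamma_A}e^{\lambda_A'},\omega_B\rangle=0$ for $A<B$, so no separate estimate is needed. The only subtlety worth stating explicitly is the injectivity of $A\mapsto\gamma_A$, ensuring that the leading monomials of distinct basis candidates are genuinely distinct and that the argument pins down a single coefficient at each stage; this follows immediately from the semistandard reconstruction described above.
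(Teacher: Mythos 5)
Your proof is correct and is precisely the argument the paper intends: the paper states \Cref{sec5_coro_linear-independence} as an immediate consequence of \Cref{sec5_prop_leading-term}, and the implicit reasoning is exactly your triangularity argument (pair a vanishing linear combination against the leading monomial of the minimal tableau with nonzero coefficient). Your added remark on the injectivity of $A\mapsto\gamma_A$ is true but not actually needed, since the vanishing statement $\langle\theta^{\gamma_A}e^{\lambda_A'},\omega_B\rangle=0$ for $A<B$ already handles all larger tableaux regardless of coincidences among leading monomials.
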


\begin{theo}
\label{sec5_theo_basis}
The $\mathfrak{so}(2m+1)$-module $\mathcal{P}$ has a basis
\begin{equation}
\big\{ \omega_A : A\in \mathbb{Y}, \ell(\lambda_A')\leq p \big\},
	\end{equation}
consisting of weight vectors. The weight of $\omega_A$ is $\mu_A-\frac{p}{2}\in \N_0^m$.
	\end{theo}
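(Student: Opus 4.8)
The plan is to establish the three assertions of \Cref{sec5_theo_basis}---spanning, linear independence, and the weight claim---by combining the dimension count already available from the character formula with \Cref{sec5_coro_linear-independence}. The cleanest route avoids proving spanning directly and instead uses a counting argument: since the candidate basis is linearly independent by \Cref{sec5_coro_linear-independence}, it suffices to show that the number of vectors $\omega_A$ equals $\dim\mathcal{P}$, and that each $\omega_A$ lives in the correct weight space. Once these two facts are in place, the $\omega_A$ must span each weight space (a linearly independent set of the right cardinality inside a finite-dimensional space is automatically a basis), and hence span all of $\mathcal{P}$.

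First I would verify the weight claim, which also organizes the counting. For any $A\in\mathbb{E}$ of shape $\lambda$, the vector $\omega_A$ is a linear combination of products $\Theta_{A^\tau}(1)$, each of which applies the operator $\Theta_i$ exactly $(\mu_A)_i$ times (row permutations preserve the multiset of entries). Since $h_i(1)=-\tfrac{p}{2}$ by \Cref{sec4_prop_P-simple-module}, and each application of $\Theta_i$ raises the $\epsilon_i$-eigenvalue by $1$ (as follows from the relation $h_i=-\tfrac12[D_i,\Theta_i]$ together with the commutator structure in \eqref{sec3_eq_so(2m+1)-relations}, giving $[h_i,\Theta_j]=\delta_{ij}\Theta_j$), each monomial $\Theta_{A^\tau}(1)$ is a weight vector of weight $\mu_A-\tfrac{p}{2}$. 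A sum of weight vectors of equal weight is again a weight vector of that weight, so $\omega_A$ has weight $\mu_A-\tfrac{p}{2}$, as claimed.

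Next I would assemble the counting. By \eqref{sec4_eq_weight-space-dimension}, the dimension of the weight space $\mathcal{P}_{\mu-p/2}$ equals the number of s.s.\ Young tableaux of weight $\mu$ with at most $p$ columns, i.e.\ with $\ell(\lambda')\le p$. On the other hand, the vectors $\{\omega_A : A\in\mathbb{Y},\ \ell(\lambda_A')\le p,\ \mu_A=\mu\}$ are exactly indexed by precisely these tableaux, so their cardinality matches $\dim\mathcal{P}_{\mu-p/2}$ weight space by weight space. Here I would invoke the remark following \eqref{sec4_eq_i's-in-the-k'th-row}, namely that $\omega_A\neq 0$ precisely when $\ell(\lambda_A')\le p$ (a consequence of \Cref{sec5_lemm_monomial-expansion-tableaux-1}), to ensure that no candidate vector degenerates. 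Combined with the linear independence from \Cref{sec5_coro_linear-independence}, this forces the $\omega_A$ of weight $\mu-\tfrac{p}{2}$ to be a basis of $\mathcal{P}_{\mu-p/2}$, and taking the union over all $\mu\in\{0,\dots,p\}^m$ yields a basis of $\mathcal{P}=\bigoplus_\mu \mathcal{P}_{\mu-p/2}$.

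The main obstacle is that this argument leans entirely on inputs proved elsewhere: the linear independence in \Cref{sec5_coro_linear-independence} (which itself depends on the still-unproven \Cref{sec5_prop_leading-term}), the non-vanishing criterion $\ell(\lambda_A')\le p$, and the weight-space dimension formula \eqref{sec4_eq_weight-space-dimension}. Given all of these, the theorem is essentially a bookkeeping corollary. The one point requiring care is ensuring the counting is done weight space by weight space rather than globally, since linear independence plus equal total dimension already suffices in a finite-dimensional setting, but matching dimensions within each weight space makes the basis property transparent and confirms that every $\omega_A$ sits in exactly one weight space. I would therefore present the weight computation first and let the dimension match close the argument.
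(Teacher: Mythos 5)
Your proposal is correct and takes essentially the same route as the paper's own proof: establish that $\omega_A$ is a weight vector of weight $\mu_A-\frac{p}{2}$ via the commutation relations, then combine the linear independence of \Cref{sec5_coro_linear-independence} with the weight-space dimension count \eqref{sec4_eq_weight-space-dimension}, weight space by weight space, and take the union. The only difference is that you spell out details the paper leaves implicit (the computation $[h_i,\Theta_j]=\delta_{ij}\Theta_j$ and the per-weight-space matching), and your appeal to the non-vanishing remark is redundant, since linear independence already forces $\omega_A\neq 0$.
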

\begin{proof}
Using the relations \eqref{sec3_eq_so(2m+1)-relations} it is clear that $h_i\omega_A= ((\mu_A)_i-\frac{p}{2})\omega_A$, for all $i\in\{1,\dots,m\}$. Thus the weight of $\omega_A$ is $\mu_A-\frac{p}{2}$. Together \eqref{sec4_eq_weight-space-dimension} and \Cref{sec5_coro_linear-independence} then imply that the weight space $\mathcal{P}_{\mu-\frac{p}{2}}$ has the basis 
\begin{equation}
\big\{ \omega_A : A\in \mathbb{Y}, \ell(\lambda_A')\leq p, \mu_A=\mu \big\},
	\end{equation}
for all $\mu\in\N_0^m$. Taking the union of the bases for each weight space then yields the desired basis for $\mathcal{P}$.
	\end{proof}
In \Cref{sec4} we mentioned that the basis for the module $\mathcal{P}$ obtained in \Cref{sec5_theo_basis} automatically translates to bases, in the vector space sense, of the unital associative algebras $\mathcal{A}(\Theta)$ and $\mathcal{A}(D)$ generated respectively by the operators $\Theta_1,\dots,\Theta_m$ and $D_1,\dots,D_m$ acting on $\mathcal{B}$. 
For any $A\in\mathbb{E}$ let 
\begin{equation}
	D_A:=
	\big(D_{A(\ell(\lambda),\lambda_{\ell(\lambda)})}\cdots D_{A(\ell(\lambda),1)} \big)
	\cdots
	\big(D_{A(2,\lambda_2)}\cdots D_{A(2,1)} \big)
	\big(D_{A(1,\lambda_1)}\cdots D_{A(1,1)} \big)
	\end{equation}
and note that $(\Theta_A)^*=D_A$, for all $A\in\mathbb{E}$. We then get the following corollary to \Cref{sec5_theo_basis}.
\begin{coro}
As vector spaces $\mathcal{A}(\Theta)$ and $\mathcal{A}(D)$ have bases
\begin{equation}
\left\{
	\sum_{\tau\in S_{\lambda_A}} \Theta_{A^\tau} : A\in \mathbb{Y}, \ell(\lambda_A')\leq p
	\right\}
\quad\text{and}\quad
	\left\{
	\sum_{\tau\in S_{\lambda_A}} D_{A^\tau} : A\in \mathbb{Y}, \ell(\lambda_A')\leq p
	\right\}
\end{equation}
respectively.
	\end{coro}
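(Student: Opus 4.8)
The plan is to transport the basis of $\mathcal{P}$ from \Cref{sec5_theo_basis} back to $\mathcal{A}(\Theta)$ through the vector space isomorphism $\Phi\colon\mathcal{A}(\Theta)\to\mathcal{P}$, $\Phi(T)=T(1)$, recorded in \Cref{sec4}, and then to carry the result across Hermitian conjugation to $\mathcal{A}(D)$. First I would set $W_A:=\sum_{\tau\in S_{\lambda_A}}\Theta_{A^\tau}\in\mathcal{A}(\Theta)$ for each $A\in\mathbb{E}$ and observe, directly from the definition \eqref{sec5_eq_defi-omegaA}, that
\[
\Phi(W_A)=W_A(1)=\sum_{\tau\in S_{\lambda_A}}\Theta_{A^\tau}(1)=A!\,\omega_A .
\]
Since $A!$ is a product of factorials of non-negative integers, it is a nonzero scalar, so $\{A!\,\omega_A\}$ is a basis of $\mathcal{P}$ exactly when $\{\omega_A\}$ is.

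Next I would use that $\Phi$ is a linear isomorphism: by construction the operators of $\mathcal{A}(\Theta)$ act on $1$ to produce precisely the spanning set \eqref{41} of $\mathcal{P}$, giving surjectivity, and the action is injective, so $\Phi$ is bijective and therefore carries bases to bases in both directions. By \Cref{sec5_theo_basis} the family $\{\omega_A : A\in\mathbb{Y},\ \ell(\lambda_A')\leq p\}$ is a basis of $\mathcal{P}$; hence so is $\{A!\,\omega_A\}=\{\Phi(W_A)\}$, and applying $\Phi^{-1}$ shows that $\{W_A : A\in\mathbb{Y},\ \ell(\lambda_A')\leq p\}$ is a basis of $\mathcal{A}(\Theta)$. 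This settles the first claimed basis.

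For $\mathcal{A}(D)$ I would invoke the Hermitian conjugation $*$, which restricts to an anti-isomorphism of algebras $\mathcal{A}(\Theta)\to\mathcal{A}(D)$, and in particular to a conjugate-linear bijection of the underlying vector spaces. Using $(\Theta_B)^*=D_B$ for every $B\in\mathbb{E}$ together with $(ST)^*=T^*S^*$, one computes
\[
W_A^*=\sum_{\tau\in S_{\lambda_A}}(\Theta_{A^\tau})^*=\sum_{\tau\in S_{\lambda_A}}D_{A^\tau}.
\]
A conjugate-linear bijection preserves both linear independence and spanning, so it sends a basis to a basis; consequently the image of $\{W_A\}$, namely $\{\sum_{\tau}D_{A^\tau} : A\in\mathbb{Y},\ \ell(\lambda_A')\leq p\}$, is a basis of $\mathcal{A}(D)$.

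Essentially all of the genuine content is already packaged in \Cref{sec5_theo_basis} and in the two structural maps $\Phi$ and $*$, so the argument is largely bookkeeping. The only point demanding a moment's care is the claim that the conjugate-linear bijection $*$ transports a basis to a basis; this is routine, and moreover the relevant vectors $W_A$ have all coefficients equal to $1$, so the distinction between linear and conjugate-linear behaviour is invisible here. I would therefore expect no real obstacle.
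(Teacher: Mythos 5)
Your proposal is correct and follows exactly the route the paper intends for this corollary: transport the basis $\{\omega_A\}$ of $\mathcal{P}$ from \Cref{sec5_theo_basis} back through the evaluation isomorphism $T\mapsto T(1)$ (noting $\sum_{\tau}\Theta_{A^\tau}(1)=A!\,\omega_A$ with $A!\neq 0$), then carry it across Hermitian conjugation using $(\Theta_{A^\tau})^*=D_{A^\tau}$. The paper leaves these steps implicit, and your bookkeeping fills them in faithfully, so there is nothing to correct.
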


\subsection{Row distinct and $A$-restricted Young tableaux}
\label{sec5.3}

In order to prove \Cref{sec5_prop_leading-term} we need to know more about how $\omega_A$ expands into a linear combination of monomial terms $\theta^\gamma e^\eta$. 
To do so we introduce the notions of row distinct and $A$-restricted Young tableaux which nicely index the contributions relevant to calculate the coefficients in \eqref{sec5_eq_leading-term}.

We call a Young tableau $A$ row distinct (r.d.) if all entries of each row are distinct, that is, if $A(k,l)\neq A(k,l')$ for $(k,l),(k,l')\in\lambda_A$ with $l\neq l'$.
We denote the set of row distinct Young tableaux with entries in $\{1,\dots,p\}$ by $\mathbb{T}$. 
As an example consider for $p=4$ the following two examples of r.d.\ Young tableaux of shape $(4,3,2)$.
\begin{equation}
\ytableausetup{centertableaux,boxsize=1.1em}
	\ytableaushort{1432,123,31}\in\mathbb{T}
	\quad\text{ and }\quad 
	\ytableaushort{1423,214,34}\in\mathbb{T}.
	\end{equation}
We emphasize here that whereas the tableaux in $\mathbb{Y}$ and $\mathbb{E}$ have entries in $\{1,\dots,m\}$, the tableaux in $\mathbb{T}$ have entries in $\{1,\dots,p\}$.

Given a s.s.\ Young tableau $A\in \mathbb{Y}$ and a r.d.\ Young tableau $C\in\mathbb{T}$, both of shape $\lambda$, we define the following monomials in $\Lambda[\C^{mp}]$ and $\mathcal{C}\ell_p$. Let 
\begin{equation}
\label{sec5_eq_grassmann-monomial-tableaux}
\theta_{AC}:= 
	\prod_{k=1,\dots,\ell(\lambda)}^\rightarrow
	\left(\theta_{A(k,1),C(k,1)}\cdots \theta_{A(k,\lambda_k),C(k,\lambda_k)}\right)
	\in \Lambda[\C^{mp}]
	\end{equation}
and
\begin{equation}
\label{sec5_eq_clifford-monomial-tableaux}
e_C:= 
	\prod_{k=1,\dots,\ell(\lambda)}^\rightarrow
	\left( e_{C(k,1)}\cdots e_{C(k,\lambda_k)}\right)
	\in \mathcal{C}\ell_p,
	\end{equation}
where the arrow refers to the order in which the terms are multiplied ($k = 1$ is leftmost and $k = \ell(\lambda)$ is rightmost).
As examples of such monomials consider the case $m=4$, $p=4$,
\begin{equation}
\label{sec5_eq_r.d.-tableau-example-1}
\ytableausetup{centertableaux,boxsize=1.1em}
A=\ytableaushort{122,234}\in\mathbb{Y}
	\quad\text{ and }\quad
	C=\ytableaushort{341,214}\in\mathbb{T}.
	\end{equation}

The monomials, defined in \eqref{sec5_eq_grassmann-monomial-tableaux} and \eqref{sec5_eq_clifford-monomial-tableaux}, 
then take the form
\begin{equation}
\theta_{AC}=\theta_{13}\theta_{24}\theta_{21}\theta_{22}\theta_{31}\theta_{44}
	=-\theta_{21}\theta_{31}\theta_{22}\theta_{13}\theta_{24}\theta_{44}
	\end{equation}
and
\begin{equation}
e_C=e_3e_4e_1e_2e_1e_4=-e_2e_3.
	\end{equation}
These definitions lead us to the following explicit expansion of $\omega_A$ in terms of monomials $\theta_{AC}e_C$.
\begin{lemm}
\label{sec5_lemm_monomial-expansion-tableaux-1}
For all $A\in \mathbb{Y}$ of shape $\lambda$ with $\ell(\lambda')\leq p$, we have
\begin{equation}
\label{sec5_eq_monomial-expansion-tableaux-1}
\omega_A=\frac{\lambda_1!\cdots \lambda_{\ell(\lambda)}!}{A!}\sum_{C\in\mathbb{T},\ \lambda_C=\lambda} \theta_{AC}e_C.
	\end{equation}
	\end{lemm}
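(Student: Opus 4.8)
The plan is to expand each $\Theta_{A^\tau}(1)$ into monomials, reorganize the double sum over row permutations $\tau\in S_\lambda$ and fillings $C$, and reduce the symmetrization to a row-by-row antisymmetrization of Clifford generators. As a first step I would expand, for any tableau $B\in\mathbb{E}$ of shape $\lambda$, the vector $\Theta_B(1)$ directly from the definitions. Since $\Theta_i=\sum_\alpha\theta_{i\alpha}e_\alpha$ and each application of $\Theta_i$ multiplies the Grassmann part on the left by $\theta_{i\alpha}$ and the Clifford part on the left by $e_\alpha$, iterating over the boxes of $\lambda$ in reading order gives
\[
\Theta_B(1)=\sum_{C:\ \lambda_C=\lambda}\theta_{BC}\,e_C,
\]
where $C$ ranges over \emph{all} fillings of $\lambda$ with entries in $\{1,\dots,p\}$ and $\theta_{BC},e_C$ are as in \eqref{sec5_eq_grassmann-monomial-tableaux}--\eqref{sec5_eq_clifford-monomial-tableaux}. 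I then apply this with $B=A^\tau$.

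Next I would reorder the Grassmann factors. For fixed $\tau$ and $C$, reordering the factors of $\theta_{A^\tau,C}$ within each row so that the first index runs through $A(k,1),A(k,2),\dots$ in increasing position order costs exactly $\mathrm{sgn}(\tau)=\prod_k\mathrm{sgn}(\tau_k)$, because Grassmann variables anticommute unconditionally (so the sign is a bona fide sign even in the presence of repeats). This yields $\theta_{A^\tau,C}=\mathrm{sgn}(\tau)\,\theta_{A,C'}$, where $C'$ is obtained from $C$ by permuting the entries of row $k$ via $\tau_k^{-1}$. For fixed $\tau$ the assignment $C\mapsto C'$ is a bijection on fillings; performing this change of summation variable and writing $e_C$ back in terms of $C'$ gives
\[
\omega_A=\frac{1}{A!}\sum_{C'}\theta_{A,C'}\Bigg(\sum_{\tau\in S_\lambda}\mathrm{sgn}(\tau)\,e_{C(C',\tau)}\Bigg),
\]
where the $k$-th row block of $e_{C(C',\tau)}$ equals $e_{C'(k,\tau_k(1))}\cdots e_{C'(k,\tau_k(\lambda_k))}$.

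The crux is the inner sum over $\tau$. Because $S_\lambda=S_{\lambda_1}\times\cdots\times S_{\lambda_{\ell(\lambda)}}$ and the Clifford blocks are multiplied in a fixed row order, bilinearity lets the signed sum factorize as an ordered product of per-row antisymmetrizers,
\[
\sum_{\tau\in S_\lambda}\mathrm{sgn}(\tau)\,e_{C(C',\tau)}=\prod_{k=1,\dots,\ell(\lambda)}^{\rightarrow}\Bigg(\sum_{\tau_k\in S_{\lambda_k}}\mathrm{sgn}(\tau_k)\,e_{C'(k,\tau_k(1))}\cdots e_{C'(k,\tau_k(\lambda_k))}\Bigg).
\]
I would then prove the antisymmetrization identity in $\mathcal{C}\ell_p$: for $b_1,\dots,b_n\in\{1,\dots,p\}$ the sum $\sum_{\sigma\in S_n}\mathrm{sgn}(\sigma)\,e_{b_{\sigma(1)}}\cdots e_{b_{\sigma(n)}}$ vanishes if two of the $b_i$ coincide and equals $n!\,e_{b_1}\cdots e_{b_n}$ if they are pairwise distinct. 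This holds because the map is alternating and multilinear, hence factors through $\Lambda^n$, while relation \eqref{sec2_eq_clifford-relation} makes distinct generators anticommute. Applying it row by row annihilates every $C'$ that is not row distinct and yields a factor $\lambda_k!$ for each surviving row, so the inner sum equals $\lambda_1!\cdots\lambda_{\ell(\lambda)}!\,e_{C'}$ precisely when $C'\in\mathbb{T}$ and $0$ otherwise. Substituting back gives \eqref{sec5_eq_monomial-expansion-tableaux-1}.

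The main obstacle is the bookkeeping in the second and third steps: tracking the sign $\mathrm{sgn}(\tau)$ consistently through the Grassmann reordering and the Clifford antisymmetrization — the two copies of the sign cancel, which explains why the final coefficients are positive — and justifying the factorization of the signed $\tau$-sum into an ordered product of per-row antisymmetrizers despite the noncommutativity of the Clifford blocks. Everything else is routine once the single-row case is checked, where one sees directly that $\lambda_1!/A!=1/\prod_i m_i!$ reproduces the claimed normalization.
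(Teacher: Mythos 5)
Your proof is correct. It shares the paper's overall skeleton --- factor the sum over $S_\lambda=S_{\lambda_1}\times\cdots\times S_{\lambda_{\ell(\lambda)}}$ row by row and reduce everything to a per-row (anti)symmetrization identity --- but it organizes the key cancellation differently. The paper never separates a Grassmann factor $\theta_{i\alpha}$ from its Clifford partner $e_\alpha$: it symmetrizes the operator products $\sum_{\sigma\in S_{\lambda_k}}\Theta_{A(k,\sigma(1))}\cdots\Theta_{A(k,\sigma(\lambda_k))}$ directly, so that permuting the $(\theta_{i\alpha},e_\alpha)$ pairs simultaneously produces no sign at all, the terms with a repeated column index cancel against the Grassmann antisymmetry in the sum over $\sigma$, and the distinct-index terms coalesce with multiplicity $\lambda_k!$; all of this is compressed into the second (asserted, not justified) equality of the paper's displayed computation. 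You instead expand $\Theta_{A^\tau}(1)$ over all fillings first, move the row permutation off $A$ and onto the filling $C$ at the cost of $\mathrm{sgn}(\tau)$ from reordering Grassmann variables, and then evaluate the purely Clifford-algebraic antisymmetrizer $\sum_{\sigma\in S_n}\mathrm{sgn}(\sigma)\,e_{b_{\sigma(1)}}\cdots e_{b_{\sigma(n)}}$, which vanishes when two indices coincide and equals $n!\,e_{b_1}\cdots e_{b_n}$ when they are distinct. The two routes are computationally equivalent, and your change of variables $C\mapsto C'$ together with the factorization of the signed $\tau$-sum into per-row antisymmetrizers is carried out correctly; what your version buys is that the cancellation mechanism is isolated as a standard, independently provable identity in $\mathcal{C}\ell_p$, thereby supplying the justification the paper leaves implicit, at the price of the explicit sign bookkeeping that the paper's paired-factor convention is designed to avoid.
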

\begin{proof}
We make the following calculation based on \eqref{sec5_eq_defi-omegaA}.
\begin{align*}
\omega_A
	&=
	\frac{1}{A!}
	\prod_{k=1,\dots,\ell(\lambda)}^\rightarrow
	\left(
	\sum_{\sigma\in S_{\lambda_k}} \Theta_{A(k,\sigma(1))}\cdots\Theta_{A(k,\sigma(\lambda_k))}
	\right)
	\\&=
	\frac{\lambda_1!\cdots \lambda_{\ell(\lambda)}!}{A!}
	\prod_{k=1,\dots,\ell(\lambda)}^\rightarrow
	\left(
	\sum^{}{}^{'}\
	\theta_{A(k,1),\alpha_1}\cdots\theta_{A(k,\lambda_k),\alpha_{\lambda_k}}e_{\alpha_1}\cdots e_{\alpha_{\lambda_k}}
	\right)
	\\&=
	\frac{\lambda_1!\cdots \lambda_{\ell(\lambda)}!}{A!}
	\sum_{C\in\mathbb{T},\ \lambda_C=\lambda}
	\left(
	\prod_{k=1,\dots,\ell(\lambda)}^\rightarrow
	\left(\theta_{A(k,1),C(k,1)}\cdots \theta_{A(k,\lambda_k),C(k,\lambda_k)}e_{C(k,1)}\cdots e_{C(k,\lambda_k)}\right)
	\right)
	\\&=
	\frac{\lambda_1!\cdots \lambda_{\ell(\lambda)}!}{A!}\sum_{C\in\mathbb{T},\ \lambda_C=\lambda} \theta_{AC}e_C,
	\end{align*}
where $\sum'$ means that we are summing over elements $\alpha_1,\dots,\alpha_k\in\{1,\dots,p\}$ for which $\alpha_i\neq\alpha_j$ when $i\neq j$.
	\end{proof}

Continuing the example started before \Cref{sec5_lemm_monomial-expansion-tableaux-1}, we illustrate that some of the terms in the expansion \eqref{sec5_eq_monomial-expansion-tableaux-1} equal zero and some are identical. Specifically, if
\begin{equation}
\label{sec5_eq_r.d.-tableau-example-2}
\ytableausetup{centertableaux,boxsize=1.1em}
	C=\ytableaushort{341,214}\in\mathbb{T},
	\quad
	C'=\ytableaushort{342,214}\in\mathbb{T}
	\quad\text{ and }\quad
	C''=\ytableaushort{314,214}\in\mathbb{T},
	\end{equation}
then 
\begin{equation}
\theta_{AC'}
	=
	\theta_{13}\theta_{24}\theta_{22}\theta_{22}\theta_{31}\theta_{44}
	=0,
	\end{equation}
since $\theta_{22}^2=0$, and
\begin{equation}
\theta_{AC''}e_{C''}=\theta_{AC}e_C.
	\end{equation}

To get an expansion of $\omega_A$ that takes these observations into account we define, for $A\in\mathbb{Y}$, the notion of $A$-restricted Young tableaux.

Given a r.d.\ Young tableau $C\in\mathbb{T}$ of shape $\lambda_A$, we say that $C$ is {\em $A$-restricted} if any two entries in $C$ are distinct whenever the corresponding entries in $A$ are equal, and if any two entries on the same row of $C$ are distinct and increasing from left to right whenever the corresponding entries in $A$ are equal.
These conditions can be written rigorously in the following manner.
\begin{equation}
C(k,l)\neq C(k',l'),
	\end{equation}
for all $(k,l),(k',l')\in \lambda_A$ with $(k,l)\neq (k',l')$ and $A(k,l)=A(k',l')$; and 
\begin{equation}
C(k,l) < C(k,l'),
	\end{equation}
for all $(k,l),(k,l')\in \lambda_A$ with $l<l'$ and $A(k,l)=A(k,l')$.
We denote the set of $A$-restricted Young tableaux by $\mathbb{T}_A$. 
Note as an example that of the tableaux $C$, $C'$ and $C''$, defined in \eqref{sec5_eq_r.d.-tableau-example-1} and \eqref{sec5_eq_r.d.-tableau-example-2}, only $C''$ is $A$-restricted with respect to the $A$ defined in \eqref{sec5_eq_r.d.-tableau-example-1}.
\begin{lemm}
\label{sec5_lemm_monomial-expansion-tableaux-2}
For all $A\in \mathbb{Y}$ with $\ell(\lambda_A')\leq p$, we have
\begin{equation}
\label{sec5_eq_monomial-expansion-tableaux-2}
\omega_A=\sum_{C\in\mathbb{T}_A} \theta_{AC}e_C.
	\end{equation}
In addition $\theta_{AC}e_C\neq 0$, for all $C\in\mathbb{T}_A$.
	\end{lemm}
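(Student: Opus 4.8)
The plan is to begin from the expansion over all row distinct tableaux provided by \Cref{sec5_lemm_monomial-expansion-tableaux-1}, to discard the vanishing terms, and then to group the surviving terms so that each $A$-restricted tableau indexes exactly one group whose common value absorbs the scalar prefactor. First I would record the vanishing criterion: since $\theta_{AC}$ is a product of the Grassmann variables $\theta_{A(k,l),C(k,l)}$ and $\theta_{i\alpha}^2=0$, the monomial $\theta_{AC}$ is nonzero precisely when no Grassmann variable is repeated, that is, precisely when $C(k,l)\neq C(k',l')$ for every pair of distinct boxes with $A(k,l)=A(k',l')$. This is exactly the first defining condition of an $A$-restricted tableau, so the surviving terms in \eqref{sec5_eq_monomial-expansion-tableaux-1} (those with $C\in\mathbb{T}$, $\lambda_C=\lambda$) are precisely those satisfying this condition; moreover $e_C$, being a product of Clifford generators, is never zero, which already establishes the ``In addition'' clause.

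Next I would introduce the group $G:=\prod_{k}\prod_{i}S_{m_{k,i}}$, where $m_{k,i}:=\#\{\,i\text{'s in the }k\text{'th row of }A\,\}$, acting on the surviving tableaux $C$ by independently permuting, within each row $k$, the entries $C(k,l)$ sitting over a fixed repeated value $i=A(k,l)$ of $A$. Because $A$ is semistandard, equal entries of a row occupy consecutive columns, so each such block of $C$-entries forms a run of consecutive factors in both $\theta_{AC}$ and $e_C$. The key claim is that $\theta_{AC}e_C$ is constant on each $G$-orbit: a permutation $\sigma$ acting on one block reorders the consecutive distinct anticommuting factors $\theta_{i,C(k,l)}$ in $\theta_{AC}$, contributing a sign $\operatorname{sgn}(\sigma)$, and simultaneously reorders the consecutive distinct Clifford generators $e_{C(k,l)}$ in $e_C$, contributing the same sign $\operatorname{sgn}(\sigma)$; the two signs cancel, leaving $\theta_{AC}e_C$ unchanged. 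Verifying this sign cancellation cleanly---checking that the relevant factors really are consecutive, and that the Grassmann and Clifford signs are genuinely identical---is the main obstacle, and is the point at which the semistandardness of $A$ is essential.

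Finally I would assemble the count. The first $A$-restriction condition is $G$-invariant, since $G$ only permutes $C$-values among boxes sharing a common $A$-entry and hence leaves their multiset fixed; thus $G$ acts on the surviving index set, and because the entries within each block are distinct the action is free, so every orbit has size $|G|=\prod_{k,i}m_{k,i}!$. Within each orbit there is a unique tableau whose block entries increase along each row, namely the unique $A$-restricted representative, so the orbits are in bijection with $\mathbb{T}_A$. Using \eqref{sec4_eq_i's-in-the-k'th-row}, the prefactor in \eqref{sec5_eq_monomial-expansion-tableaux-1} equals $\lambda_1!\cdots\lambda_{\ell(\lambda)}!/A!=1/\prod_{k,i}m_{k,i}!$, the reciprocal of the common orbit size. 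Replacing each orbit-sum by $|G|$ copies of the value attached to its $A$-restricted representative then collapses \eqref{sec5_eq_monomial-expansion-tableaux-1} to $\sum_{C\in\mathbb{T}_A}\theta_{AC}e_C$, which is the desired identity.
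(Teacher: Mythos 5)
Your proposal is correct and is essentially the paper's own argument: both hinge on the vanishing criterion (a term $\theta_{AC}e_C$ dies exactly when two distinct boxes with equal $A$-entries carry equal $C$-entries, which also gives the ``in addition'' clause) and on the cancellation of the Grassmann sign against the Clifford sign when the entries over a block of equal $A$-values in a row are permuted, with the prefactor $\lambda_1!\cdots\lambda_{\ell(\lambda)}!/A! = 1/\prod_{k,i} m_{k,i}!$ absorbing the resulting multiplicity. The only difference is organizational: the paper re-enters the proof of \Cref{sec5_lemm_monomial-expansion-tableaux-1} and rewrites its restricted sums over distinct indices as sums over block-increasing indices, whereas you take the statement of \Cref{sec5_lemm_monomial-expansion-tableaux-1} as a black box and phrase the same collapse as a free action of $G=\prod_{k,i}S_{m_{k,i}}$ on the surviving tableaux, with orbits in bijection with $\mathbb{T}_A$.
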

\begin{proof}
To get this expansion we start from \eqref{sec5_eq_monomial-expansion-tableaux-1}.
Given $C\in \mathbb{T}$ with $\lambda_A=\lambda_C$ it is clear that $\theta_{AC}e_C=0$ if and only if there exists $(k,l),(k',l')\in \lambda_A$ such that $(k,l)\neq (k',l')$, $A(k,l)=A(k',l')$ and $C(k,l)=C(k',l')$.

To get the identity \eqref{sec5_eq_monomial-expansion-tableaux-2} we recall from the proof of \Cref{sec5_lemm_monomial-expansion-tableaux-2} that 
\begin{align*}
\omega_A
	&=
	\frac{\lambda_1!\cdots \lambda_{\ell(\lambda)}!}{A!}
	\prod_{k=1,\dots,\ell(\lambda)}^\rightarrow
	\left(
	\sum^{}{}^{'}
	\theta_{A(k,1),\alpha_1}\cdots\theta_{A(k,\lambda_k),\alpha_{\lambda_k}}e_{\alpha_1}\cdots e_{\alpha_{\lambda_k}}
	\right).
	\end{align*}
Noting furthermore that 
\begin{equation}
\theta_{i,\alpha_1}\cdots\theta_{i,\alpha_j}e_{\alpha_1}\cdots e_{\alpha_j}
	=
	\theta_{i,\alpha_{\sigma(1)}}\cdots\theta_{i,\alpha_{\sigma(j)}}e_{\alpha_{\sigma(1)}}\cdots e_{\alpha_{\sigma(j)}},
	\end{equation}
for all $i\in\{1,\dots,m\}$, $\alpha_1,\dots,\alpha_j\in\{1,\dots,p\}$ and $\sigma\in S_j$, and recalling \eqref{sec4_eq_i's-in-the-k'th-row} we can write
\begin{align*}
\omega_A
	&=
	\prod_{k=1,\dots,\ell(\lambda)}^\rightarrow
	\left(
	\sum^{}{}^{''}\
	\theta_{A(k,1),\alpha_1}\cdots\theta_{A(k,\lambda_k),\alpha_{\lambda_k}}e_{\alpha_1}\cdots e_{\alpha_{\lambda_k}}
	\right)
	\\&=
	\sum_{C\in\mathbb{T}_A}
	\left(
	\prod_{k=1,\dots,\ell(\lambda)}^\rightarrow
	\left(\theta_{A(k,1),C(k,1)}\cdots \theta_{A(k,\lambda_k),C(k,\lambda_k)}e_{C(k,1)}\cdots e_{C(k,\lambda_k)}\right)
	\right)
	\\&=	
	\sum_{C\in\mathbb{T}_A} \theta_{AC}e_C,
	\end{align*}
where $\sum''$ means that we are summing over elements $\alpha_1,\dots,\alpha_k\in\{1,\dots,p\}$ for which $\alpha_i\neq\alpha_j$ when $i\neq j$, and $\alpha_i<\alpha_j$ when $i<j$ and $A(k,i)=A(k,j)$.
	\end{proof}
	
To illustrate the use of \Cref{sec5_lemm_monomial-expansion-tableaux-2} we continue the example of equation \eqref{sec5_eq_example-omegaA} in the case $p=3$. Here
\begin{equation}
\ytableausetup{centertableaux,boxsize=1.1em}
	A=\ytableaushort{113,22}.
	\end{equation}
We first note that $\mathbb{T}_A$ contains $9$ elements:
\begin{equation}
\ytableausetup{centertableaux,boxsize=1.1em}
\mathbb{T}_A=
	\left\{
	\tiny\ytableaushort{123,12},\
	\tiny\ytableaushort{123,13},\
	\tiny\ytableaushort{123,23},\
	\tiny\ytableaushort{132,12},\
	\tiny\ytableaushort{132,13},\
	\tiny\ytableaushort{132,23},\
	\tiny\ytableaushort{231,12},\
	\tiny\ytableaushort{231,13},\
	\tiny\ytableaushort{231,23}
	\right\}.
	\end{equation}
Using \Cref{sec5_lemm_monomial-expansion-tableaux-2} we can then calculate the expansion of $\omega_A$ into monomials $\theta^\gamma e^\eta$.
\begin{equation}
\label{sec5_eq_example-expansion-A-restricted}
\begin{split}
\ytableausetup{centertableaux,boxsize=0.9em}
\omega_A&=\theta_{\left(\text{ }
\tiny
\ytableaushort{113,22}
\text{ , }
\ytableaushort{123,12}\text{ }\right)}
e_{\tiny
\ytableaushort{123,12}}
	+\theta_{\left(\text{ }
\tiny
\ytableaushort{113,22}
\text{ , }
\ytableaushort{123,13}\text{ }\right)}
e_{\tiny
\ytableaushort{123,13}}
	+\theta_{\left(\text{ }
\tiny
\ytableaushort{113,22}
\text{ , }
\ytableaushort{123,23}\text{ }\right)}
e_{\tiny
\ytableaushort{123,23}}\\\\
	&\quad+\theta_{\left(\text{ }
\tiny
\ytableaushort{113,22}
\text{ , }
\ytableaushort{132,12}\text{ }\right)}
e_{\tiny
\ytableaushort{132,12}}
	+\theta_{\left(\text{ }
\tiny
\ytableaushort{113,22}
\text{ , }
\ytableaushort{132,13}\text{ }\right)}
e_{\tiny
\ytableaushort{132,13}}
	+\theta_{\left(\text{ }
\tiny
\ytableaushort{113,22}
\text{ , }
\ytableaushort{132,23}\text{ }\right)}
e_{\tiny
\ytableaushort{132,23}}\\\\
	&\quad+\theta_{\left(\text{ }
\tiny
\ytableaushort{113,22}
\text{ , }
\ytableaushort{231,12}\text{ }\right)}
e_{\tiny
\ytableaushort{231,12}}
	+\theta_{\left(\text{ }
\tiny
\ytableaushort{113,22}
\text{ , }
\ytableaushort{231,13}\text{ }\right)}
e_{\tiny
\ytableaushort{231,13}}
	+\theta_{\left(\text{ }
\tiny
\ytableaushort{113,22}
\text{ , }
\ytableaushort{231,23}\text{ }\right)}
e_{\tiny
\ytableaushort{231,23}}\ 
	\\\\&=
	\theta_{11}\theta_{21}\theta_{12}\theta_{22}\theta_{33}e_3
	-\theta_{11}\theta_{21}\theta_{12}\theta_{23}\theta_{33}e_2
	-\theta_{11}\theta_{12}\theta_{22}\theta_{23}\theta_{33}e_1
	-\theta_{11}\theta_{21}\theta_{22}\theta_{32}\theta_{13}e_3
	\\&\quad+
	\theta_{11}\theta_{21}\theta_{32}\theta_{13}\theta_{23}e_2
	-\theta_{11}\theta_{22}\theta_{32}\theta_{13}\theta_{23}e_1
	-\theta_{21}\theta_{31}\theta_{12}\theta_{22}\theta_{13}e_3
	-\theta_{21}\theta_{31}\theta_{12}\theta_{13}\theta_{23}e_2
	\\&\quad+
	\theta_{31}\theta_{12}\theta_{22}\theta_{13}\theta_{23}e_1.
	\end{split}
	\end{equation}

To use \Cref{sec5_lemm_monomial-expansion-tableaux-2} in the proof of \Cref{sec5_prop_leading-term} it is necessary to identify the leading monomial $\theta^{\gamma_A} e^{\lambda_A'}$ with a term of the form $\theta_{AC}e_C$ with $C\in\mathbb{T}_A$. 
To do so we let $D_\lambda$, for $\lambda\in\mathbb{P}$, denote the Young tableau of shape $\lambda$ which has $1$'s in all entries of the first column, $2$'s in all entries of the second column and $l$'s in all entries of the $l$'th column. That is 
\begin{equation}
\label{sec5_eq_leading-tableau}
D_\lambda(k,l):=l,
	\end{equation}
for all $(k,l)\in \lambda$. 
From this definition it follows that if $A\in\mathbb{Y}$ is a s.s.\ Young tableau of shape $\lambda$ with $\ell(\lambda')\leq p$, then $D_\lambda$ is an $A$-restricted Young tableau in $\mathbb{T}_A$ and 
\begin{equation}
\theta_{AD_\lambda}e_{D_\lambda}= \theta^{\gamma_A}e^{\lambda_A'}.
	\end{equation}

To illustrate the tableau $D_\lambda$ and its relation to the leading monomial we continue the example of equation \eqref{sec5_eq_example-expansion-A-restricted}. Let $p=3$, $m=3$ and   
\begin{equation}
	\ytableausetup{centertableaux,boxsize=1.1em}
	A=\ytableaushort{113,22}.
\end{equation}
We write $\lambda=\lambda_A$. The shape of $A$ is then $\lambda=(3,2,0)$ which means that
\begin{equation}
\ytableausetup{centertableaux,boxsize=1.1em}
D_{\lambda} = 
\ytableaushort{123,12}.
	\end{equation}
From this we get 
\begin{equation}
\label{sec5_eq_leading-monomial-leading-tableau-ex1}
\theta_{AD_{\lambda}}e_{D_{\lambda}} 
	= \theta_{11}\theta_{12}\theta_{33}\theta_{21}\theta_{22}e_1e_2e_3e_1e_2
	= \theta_{11}\theta_{21}\theta_{12}\theta_{22}\theta_{33}e_3.
	\end{equation}
Note in addition that $\lambda'=(2,2,1)$ and that
\begin{equation}
\gamma_A=
\left(
\begin{matrix}
	1&1&0\\
	1&1&0\\
	0&0&1
	\end{matrix}
	\right).
	\end{equation}
The leading monomial of $A$ can then be calculated
\begin{equation}
\label{sec5_eq_leading-monomial-leading-tableau-ex2}
	\theta^{\gamma_A}e^{\lambda'}
	= \theta_{11}^1\theta_{21}^1\theta_{31}^0\theta_{12}^1\theta_{22}^1\theta_{32}^0\theta_{13}^0\theta_{23}^0\theta_{33}e_1^2e_2^2e_3
	= \theta_{11}\theta_{21}\theta_{12}\theta_{22}\theta_{33}e_3.
	\end{equation}
Comparing \eqref{sec5_eq_leading-monomial-leading-tableau-ex1} and \eqref{sec5_eq_leading-monomial-leading-tableau-ex2} we get $\theta_{AD_{\lambda}}e_{D_{\lambda}} = \theta^{\gamma_A}e^{\lambda'}$. 
In \eqref{sec5_eq_example-expansion-A-restricted} we calculated the monomial expansion of $\omega_A$. In accordance with \Cref{sec5_prop_leading-term} the leading term appears in that expansion with coefficient $1$.

\subsection{Proof of \Cref{sec5_prop_leading-term}}
\label{sec5.4}

\begin{proof}
Throughout this proof we let $A\in \mathbb{Y}$ be a s.s.\ Young tableau, with $\ell(\lambda_A')\leq p$. For ease of notation we write $D_\lambda=D_{\lambda_A}$.
We begin by noting that for any $C\in\mathbb{T}_A$ there exists a unique $\gamma\in M_{mp}(\Z_2)$ such that $\theta_{AC}e_C= \pm\theta^\gamma e^{\eta_\gamma}$, where the use of $\pm$ means that $\theta_{AC}e_C= \varepsilon\theta^\gamma e^{\eta_\gamma}$, for some $\varepsilon\in\{\pm1\}$. Together with \eqref{sec5_eq_monomial-expansion} this implies the first statement of \Cref{sec5_prop_leading-term}, namely that
\begin{equation}
\omega_A = 
	\sum_{\gamma\in M_{mp}(\Z_2)} \langle \theta^\gamma e^{\eta_\gamma}, \omega_A \rangle\theta^\gamma e^{\eta_\gamma}.
	\end{equation}

We now turn to proving that $\big\langle \theta^{\gamma_A} e^{\lambda_A'}, \omega_A \big\rangle = 1$. 
Since $\theta_{AD_\lambda}e_{D_\lambda}=\theta^{\gamma_A} e^{\lambda_A'}$, it follows from \Cref{sec5_lemm_monomial-expansion-tableaux-2} that we can prove this by showing that if $\theta_{AC}e_C=\pm\theta_{AD_\lambda}e_{D_\lambda}$, for some $C\in\mathbb{T}_A$, then $C=D_\lambda$. So suppose we have such a tableau $C$. Then
\begin{equation}
\label{sec5_eq_leading-term-proof-1}
\theta_{AD_\lambda}=\pm \theta_{AC}.
	\end{equation}
Using \eqref{sec5_eq_grassmann-monomial-tableaux} and \eqref{sec5_eq_leading-tableau} we can write
\begin{equation}
\label{sec5_eq_leading-term-proof-2}
\theta_{AC}=\pm\prod_{(k,l)\in\lambda} \theta_{A(k,l),C(k,l)}
\quad\text{ and }\quad
\theta_{AD_\lambda}=\pm\prod_{(k,l)\in\lambda} \theta_{A(k,l),l}.
	\end{equation}
Let $n=(\lambda_{A^1})_1$. Then by using \eqref{sec5_eq_leading-term-proof-1} and \eqref{sec5_eq_leading-term-proof-2} to compare the terms for which $A(k,l)=1$, that is those for which $(k,l)\in\lambda_{A^1}=\{(1,1),\dots,(1,n)\}$, we get the following identity:
\begin{equation}
\theta_{1,C(1,1)}\cdots \theta_{1,C(1,n)} = \pm \theta_{1,1}\cdots \theta_{1,n}.
	\end{equation}
By assumption $C$ is an $A$-restricted Young tableau, which means that $C(1,1)<\cdots<C(1,n)$ and thus $C(1,l)=l=D_\lambda(1,l)$, for $l\in\{1,\dots,n\}$. In other words, $C$ and $D_\lambda$ agree on all coordinates of $\lambda_{A^1}$.

Now suppose by induction that $C(k,l)=D_\lambda(k,l)$, for all $(k,l)\in\lambda_{A^t}$, that is for all $(k,l)\in\lambda$ with $A(k,l)\leq t$.
Since $C$ is an $A$-restricted Young tableau, this implies that
\begin{equation}
\label{sec5_eq_leading-term-proof-3}
(\lambda_{A^t})_k<C(k,(\lambda_{A^{t}})_k+1)<\cdots<C(k,(\lambda_{A^{t+1}})_k),
	\end{equation}
for all $k\in\{1,\dots,t+1\}$. 
By using \eqref{sec5_eq_leading-term-proof-1} and \eqref{sec5_eq_leading-term-proof-2} to compare the terms for which $A(k,l)=t+1$, that is those corresponding to the coordinates in
\begin{equation}
\big\{ (k,l): 1\leq k\leq t+1 \text{ and } (\lambda_{A^{t}})_k+1\leq l\leq (\lambda_{A^{t+1}})_k\big\},
	\end{equation}
we get the following identity
\begin{equation}
\label{sec5_eq_leading-term-proof-4}
\prod_{k=1}^{t+1} \theta_{t+1, C(k,(\lambda_{A^{t}})_k+1)}\cdots \theta_{t+1,C(k,(\lambda_{A^{t+1}})_k)}
	=
	\pm\prod_{k=1}^{t+1} \theta_{t+1, (\lambda_{A^{t}})_k+1}\cdots \theta_{t+1,(\lambda_{A^{t+1}})_k}.
	\end{equation}

By definition $A$ is a s.s.\ Young tableau. This means in particular that 
\begin{equation}
(\lambda_{A^{t+1}})_{k+1}\leq (\lambda_{A^{t}})_k\leq (\lambda_{A^{t+1}})_k,
	\end{equation}
for all $k\in\{1,\dots,t\}$.
With this in mind, the only way both \eqref{sec5_eq_leading-term-proof-3} and \eqref{sec5_eq_leading-term-proof-4} can be true is if $C(k,l)=l=D_\lambda(k,l)$, for all $k\in\{1,\dots,t+1\}$ and $l\in \{(\lambda_{A^{t}})_k+1,\dots, (\lambda_{A^{t+1}})_k\}$, that is for all $(k,l)$ with $A(k,l)=t+1$.
By assumption we already know that $C$ and $D_\lambda$ agree on the coordinates of $\lambda_{A^t}$, so we can conclude that they also agree on the coordinates of $\lambda_{A^{t+1}}$.  
This concludes the proof of the induction step, meaning that $C=D_\lambda$. From this we get $\big\langle \theta^{\gamma_A} e^{\lambda_A'}, \omega_A \big\rangle = 1$. 

We now turn to proving that $\big\langle \theta^{\gamma_A} e^{\lambda_A'}, \omega_B \big\rangle = 0$, for all $B\in \mathbb{Y}$ with $\ell(\lambda_B')\leq p$ and $A<B$. 
Let $C\in\mathbb{T}_B$. Then there exists $\gamma\in M_{mp}(\Z_2)$ such that $\theta_{BC}e_C= \pm\theta^\gamma e^{\eta_\gamma}$. Specifically,
\begin{equation}
\gamma_{ij}=\#\big\{ (k,l)\in\lambda: B(k,l)=i,\ C(k,l)=j\big\},
	\end{equation}
for all $i\in\{1,\dots,m\}$ and $j\in\{1,\dots,p\}$.
Note that 
\begin{equation}
\mu_A=
	\left(
	\sum_{j=1}^p (\gamma_A)_{1j},\dots,\sum_{j=1}^p (\gamma_A)_{mj}
	\right)
\quad\text{ and }\quad
\mu_B=
	\left(
	\sum_{j=1}^p \gamma_{1j},\dots,\sum_{j=1}^p \gamma_{mj}
	\right).
	\end{equation}
It then follows that $\gamma_A\neq\gamma$ and $\langle \theta^{\gamma_A} e^{\lambda_A'}, \theta_{BC}e_C\rangle=\langle \theta^{\gamma_A} e^{\lambda_A'}, \pm \theta^\gamma e^{\eta_\gamma}\rangle=0$ if $\mu_A\neq\mu_B$. Using \Cref{sec5_lemm_monomial-expansion-tableaux-2} we can thus conclude that $\big\langle \theta^{\gamma_A} e^{\lambda_A'}, \omega_B \big\rangle = 0$ if $\mu_A\neq\mu_B$. If on the other hand $\mu_A=\mu_B$, then the assumption that $A<B$ implies that there exists $s\in\{1,\dots,m\}$ such that 
\begin{equation}
\lambda_{A^i}=\lambda_{B^i}
\quad\text{ and }\quad
\lambda_{A^s}<\lambda_{B^s},
	\end{equation}
for all $i<s$, or equivalent that
\begin{equation}
\lambda_{A^i}'=\lambda_{B^i}'
\quad\text{ and }\quad
\lambda_{A^s}'>\lambda_{B^s}',
	\end{equation}
for all $i<s$. The statement $\lambda_{A^s}'>\lambda_{B^s}'$ implies that there exists $t\in\{1,\dots,\ell(\lambda_{A^s})\}$ such that 
\begin{equation}
(\lambda_{A^s}')_j=(\lambda_{B^s}')_j
\quad\text{ and }\quad
(\lambda_{A^s}')_t>(\lambda_{B^s}')_t,
	\end{equation}
for all $j<t$. 
With this we can make the following calculation
\begin{equation}
\label{sec5_eq_leading-term-proof-5}
\begin{split}
\sum_{i=1}^s\sum_{j=1}^t \gamma_{ij}
	&= 
	\#\big\{ (k,l)\in\lambda: 1\leq B(k,l)\leq s,\ 1\leq C(k,l)\leq t\big\}
	\leq
	\sum_{j=1}^t (\lambda_{B^s}')_j
	\\&<
	\sum_{j=1}^t (\lambda_{A^s}')_j
	=
	\sum_{i=1}^s\sum_{j=1}^t (\lambda_{A^i}')_j - (\lambda_{A^{i-1}}')_j
	\\&=
	\sum_{i=1}^s\sum_{j=1}^t (\gamma_A)_{ij},
	\end{split}
	\end{equation}
where the first inequality comes from noting that $C$ is $B$-restricted and thus row distinct, and the last identity follows from the observation that
\begin{equation}
(\gamma_A)_{i\alpha}=\#\{ \text{ $i$'s in the $\alpha$'th column of $A$ }\} = (\lambda_{A^i}')_\alpha - (\lambda_{A^{i-1}}')_\alpha,
	\end{equation}
for any $i\in\{1,\dots,m\}$ and $\alpha\in\{1,\dots,p\}$. Presently the inequality \eqref{sec5_eq_leading-term-proof-5} implies that $\gamma\neq \gamma_A$, which tells us that
\begin{equation}
\big\langle \theta^{\gamma_A} e^{\lambda_A'}, \theta_{BC}e_C \big\rangle 
	= \pm \big\langle \theta^{\gamma_A} e^{\lambda_A'}, \theta^\gamma e^{\eta_\gamma} \big\rangle
	= 0.
	\end{equation} 
By use of \Cref{sec5_lemm_monomial-expansion-tableaux-2} it then follows that $\big\langle \theta^{\gamma_A} e^{\lambda_A'}, \omega_B \big\rangle=0$, concluding the proof of \Cref{sec5_prop_leading-term}.
	\end{proof}

\appendix
\section{Graded Lexicographic ordering of $\N_0^m$ and $\mathbb{P}$}
\label{appA}

The graded lexicographic ordering of $\N_0^m$ is a total ordering and is defined by the following relation. Given $\mu,\eta\in \N_0^m$ we write $\mu<\eta$ if $|\mu|:=\sum_{i=1}^m \mu_i < |\eta|:=\sum_{i=1}^m \eta_i$, or if $|\mu|=|\eta|$ and if $\mu_j<\eta_j$, where $j$ is the first index for which $\mu$ and $\eta$ differ. 
To illustrate this ordering, we present here the ordering of the elements in $\mu\in\N_0^3$ with $|\mu|\leq 3$.
\begin{equation}
\begin{split}
&
(0,0,0)<(0,0,1)<(0,1,0)<(1,0,0)<(0,0,2)<(0,1,1)<(0,2,0)<
\\&
(1,0,1)<(1,1,0)<(2,0,0)<(0,0,3)<(0,1,2)<(0,2,1)<(0,3,0)<
\\&
(1,0,2)<(1,1,1)<(1,2,0)<(2,0,1)<(2,1,0)<(3,0,0).
	\end{split}
	\end{equation}

The graded lexicographic ordering of $\mathbb{P}$ is a total ordering and is defined by the following relation. Given $\lambda,\kappa\in\mathbb{P}$ we write $\lambda<\kappa$ if $|\lambda|:=\sum_{i=1}^{\ell(\lambda)}\lambda_i<|\kappa|:=\sum_{i=1}^{\ell(\kappa)}\kappa_i$, or if $|\lambda|=|\kappa|$ and if $\lambda_j<\kappa_j$, where $j$ is the first index for which $\lambda$ and $\kappa$ differ.
To illustrate this ordering, we present here the ordering of the elements in $\lambda\in\mathbb{P}$ with $|\lambda|\leq 4$.
\begin{equation}
\begin{split}
&
(0,0,0,0)<(1,0,0,0)<(1,1,0,0)<(2,0,0,0)<(1,1,1,0)<(2,1,0,0)<
\\&
(3,0,0,0)<(1,1,1,1)<(2,1,1,0)<(2,2,0,0)<(3,1,0,0)<(4,0,0,0).
	\end{split}
	\end{equation}

\section*{Acknowledgements}
The authors were supported by the EOS Research Project 30889451.


\begin{thebibliography}{0}


\bibitem{Berezin-1987}
F.A. Berezin:
\emph{Introduction to Superanalysis},
Mathematical Physics and Applied Mathematics, vol. 9, 
D. Reidel Publishing Co., Dordrecht (1987).


\bibitem{Bisbo-DeBie-VanderJeugt-2021}
A. K. Bisbo, H. De Bie, J. Van der Jeugt:
\emph{Representations of the Lie superalgebra $\mathfrak{osp}(1| 2n)$ with polynomial bases},
Symmetry, Integrability and Geometry: Methods and Applications {\bf 17} (2021).

\bibitem{Colombo-Sommen-Sabadini-Struppa-2004}
F. Colombo, I. Sabadini, F. Sommen, D. Struppa:
\emph{Analysis of Dirac Systems and Computational Algebra},
Birkhäuser: Boston (2004).

\bibitem{Coulembier-DeBie-2015}
K. Coulembier, H. De Bie:
\emph{Conformal symmetries of the super Dirac operator},
Revista Matematica Iberoamericana 
{\bf 31 
} (2013) 373--410.

\bibitem{DeBie-Oste-VanderJeugt-2018}
H. De Bie, R. Oste, J. Van der Jeugt:
\emph{On the algebra of symmetries of Laplace and Dirac operators},
Letters in Mathematical Physics 
{\bf 108 
} (2018) 1905--1953.


\bibitem{Feigin-Fourier-Littlemann-2011-1}
E. Feigin, G. Fourier, P. Littlemann:
\emph{PBW filtration and bases for irreducible modules in type $A_n$},
Transformation Groups {\bf 16} (2011) 71--89.

\bibitem{Feigin-Fourier-Littlemann-2011-2}
E. Feigin, G. Fourier, P. Littlemann:
\emph{PBW filtration and bases for symplectic Lie algebras},
International Mathematics Research Notices {\bf 24} (2011) 5760–-5784.

\bibitem{Feigin-Fourier-Littlemann-2017}
E. Feigin, G. Fourier, P. Littlemann:
\emph{Favourable modules: Filtrations, polytobes, Newton-Okounkov bodies and flat degenerations},
Transformation Groups {\bf 22} (2017) 321--352.

\bibitem{Gornitskii-2019}
A. A. Gornitskii:
\emph{Essential signatures and monomial bases for $B_n$ and $D_n$},
Journal of Lie Theory 
{\bf 29 
} (2019) 277--302.

\bibitem{Green-1953}
H. S. Green:
\emph{A generalized method of field quantization},
Physical Review 
{\bf 90 
} (1953) 270--273.

\bibitem{James-1978}
G. D. James:
\emph{The Representation Theory of the Symmetric Groups},
Lecture Notes in Mathematics, vol. 682,
Springer: Berlin (1978).

\bibitem{Kamefuchi-Takahashi-1962}
S. Kamefuchi, Y. Takahashi:
\emph{A generalization of field quantization and statistics},
Nuclear Physics {\bf 36} (1962) 177--206.

\bibitem{Littlemann-1998}
P. Littlemann:
\emph{Cones, crystals and patterns},
Transformation Groups 
{\bf 3 
} (1998) 145--179.

\bibitem{Lusztig-1990-1}
G. Lusztig:
\emph{Canonical bases arising from quantized enveloping algebras. I},
Journal of the American Mathematical Society 
{\bf 3 
} (1990) 447--498.

\bibitem{Lusztig-1990-2}
G. Lusztig:
\emph{Canonical bases arising from quantized enveloping algebras. II},
Progress of Theoretical Physics Supplement {\bf 102} (1990) 175–-201.

\bibitem{Macdonald-1995}
I. G. Macdonald:
\emph{Symmetric Functions and Hall Polynomials},
2nd ed,
Oxford University Press: Oxford (1995).

\bibitem{Makhlin-2019}
I. Makhlin:
\emph{FFLV-type monomial bases for type B},
Algebraic Combinatorics 
{\bf 2 
} (2019) 305--322.

\bibitem{Monakhov-2016-1}
V. V. Monakhov:
\emph{Superalgebraic representation of Dirac matrices},
Theoretical and Mathematical Physics {\bf 186} (2016) 70--82.

\bibitem{Monakhov-2016-2}
V. V. Monakhov:
\emph{Dirac matrices as elements of a superalgebraic matrix algebra},
Bulletin of the Russian Academy of Sciences: Physics {\bf 80} (2016) 985--988.

\bibitem{Ohnuki-Kamefuchi-1982}
Y. Ohnuki, and S. Kamefuchi:
\emph{Quantum Field Theory and Parastatistics},
Springer: Berlin (1982).

\bibitem{Orsted-Somberg-Soucek-2009}
B. {\O}rsted, P. Somberg, V. Sou\v{c}ek:
\emph{The Howe duality for the Dunkl version of the Dirac operator},
Advances in Applied Clifford Algebras 
{\bf 19 
} (2009) 403--415.

\bibitem{Porteous-1982}
I. R. Porteous:
\emph{Clifford Algebras and the Classical Groups},
Cambridge Studies in Advanced Mathematics, vol. 50, 
Cambridge University Press: Cambridge (1995).

\bibitem{Ryan-Sudarshan-1963}
C. Ryan, E. C. G. Sudarshan:
\emph{Representations of parafermi rings},
Nuclear Physics {\bf 47} (1963) 207--211.

\bibitem{Slupinski-1996}
M. J. Slupinski:
\emph{A Hodge type decomposition for spinor valued forms},
Annales scientifiques de l'École normale supérieure 
{\bf 29 
} (1996) 23--48.

\bibitem{Sommen-1997}
F. Sommen:
\emph{An algebra of abstract vector variables},
Portugaliae Mathematica 
{\bf 54 
} (1997) 287--310.

\bibitem{Sommen-Acker-1992}
F. Sommen, N. Acker:
\emph{Monogenic differential operators},
Results in Mathematics {\bf 22} (1992) 781-798.

\bibitem{Stoilova-VanderJeugt-2008}
N. I. Stoilova, J. Van der Jeugt:
\emph{The parafermion Fock space and explicit $\mathfrak{so}(2n+1)$ representations},
Journal of Physics A: Mathematical and Theoretical 
{\bf 41 
} (2008) 075202.



%
%
%
%
%
%
%
%
%
%
%
%
%
%
%
%
%
%
%
%
%
%
%
%
%
%
%


\end{thebibliography}
\end{document}